\numberwithin{equation}{section}
\tikzset{curve/.style={settings={#1},to path={(\tikztostart)
    .. controls ($(\tikztostart)!\pv{pos}!(\tikztotarget)!\pv{height}!270:(\tikztotarget)$)
    and ($(\tikztostart)!1-\pv{pos}!(\tikztotarget)!\pv{height}!270:(\tikztotarget)$)
    .. (\tikztotarget)\tikztonodes}},
    settings/.code={\tikzset{quiver/.cd,#1}
        \def\pv##1{\pgfkeysvalueof{/tikz/quiver/##1}}},
    quiver/.cd,pos/.initial=0.35,height/.initial=0}
\tikzset{tail reversed/.code={\pgfsetarrowsstart{tikzcd to}}}
\tikzset{2tail/.code={\pgfsetarrowsstart{Implies[reversed]}}}
\tikzset{2tail reversed/.code={\pgfsetarrowsstart{Implies}}}
\tikzset{no body/.style={/tikz/dash pattern=on 0 off 1mm}}
\newtheorem{theorem}{Theorem}[section]
\newtheorem{corollary}[theorem]{Corollary}
\newtheorem{lemma}[theorem]{Lemma}
\newtheorem{proposition}[theorem]{Proposition}
\theoremstyle{definition}
\newtheorem{definition}[theorem]{Definition}
\theoremstyle{remark}
\newtheorem{remark}[theorem]{Remark}
\newcommand*{\im}{\operatorname{im}}
\newcommand*{\lmod}[1]{#1\text{-}\mathbf{Mod}}
\newcommand*{\rmod}[1]{\mathbf{Mod}\text{-}#1}
\renewcommand{\hom}{\operatorname{Hom}}
\title{Homology of Epsilon-Strongly Graded Algebras}
\keywords{Epsilon-strongly graded algebras, partial representations of groups, partial group (co)homology, Hochschild (co)homology}
\subjclass[2020]{Primary 16E40, 16W50, Secondary 16W22, 18G40, 18G60.}
\author{Emmanuel Jerez}
\address{Guangdong Technion Israel Institute of Technology, Shantou, Guangdong
Province, China}
\email{ars.ejerez@icloud.com}
\begin{document}

\begin{abstract}
    Let $G$ be a group and $S$ a unital epsilon-strongly $G$-graded algebra.
    We construct spectral sequences converging to the Hochschild (co)homology of $S$.
    Each spectral sequence is expressed in terms of the partial group (co)homology of $G$ with coefficients in the Hochschild (co)homology of the degree-one component of $S$.
    Moreover, we show that the homology spectral sequence decomposes according to the conjugacy classes of $G$, and, by means of the globalization functor, its $E^2$-page can be identified with the ordinary group homology of suitable centralizers.
\end{abstract}

\maketitle

\section*{Introduction}

Epsilon-strongly graded algebras were introduced in \cite{Article_Nystedt-Oinert-Pinedo_2018_EGRSAS} as a natural generalization of unital strongly graded algebras.
In that same work,
it was shown that this class includes both strongly graded rings and unital partial crossed products and, a simple computation also shows that it contains twisted partial crossed products.
Moreover,
other important families of algebras are epsilon-strongly graded,
such as Morita rings \cite{Article_Nystedt-Oinert-Pinedo_2018_EGRSAS},
Leavitt path algebras \cite{Article_Nystedt-Oinert_2020_GGOLPA},
corner skew polynomial rings \cite{Article_Lannstrom_2020_TGSACR},
and graded algebras with the so‑called \textit{good‑gradations} \cite{Article_Lundstrom-Oinert-Orozco-Pinedo_2025}.
This unified framework therefore allows one to study the (co)homology of all these examples in a single setting.

In \cite{Article_Alvares-Alves-Redondo_2017_COPSP},
the authors introduce the partial group cohomology \(H^\bullet_{\mathrm{par}}(G,M)\) of a group \(G\)
with coefficients in a left $K_{\mathrm{par}}G$‑module $M$,
and apply it to compute a spectral sequence converging to the cohomology of a partial crossed product \(A\!\rtimes G\),
where \(A\) is a unital algebra on which \(G\) acts by a unital partial action \(\alpha=(A_g,\alpha_g)_{g\in G}\).
Later, \cite{Article_Dokuchaev-Jerez_2023_TTPGAAOPCP} extended this to the twisted setting, replacing $A\rtimes G$ by a twisted partial crossed product $A\rtimes_{\sigma}G$ determined by a unital twisted partial action $(A_g,\alpha_g)_{g \in G}$ and a twist $\sigma$.  In the homology, case they produce a first quadrant spectral sequence
\[
  E^{2}_{p,q} = H_{p}^\mathrm{par}\bigl(G,\,H_{q}(A,M)\bigr)\Longrightarrow H_{p+q}(A\rtimes_{\sigma}G,M),
\]
where $M$ is now an $(A\rtimes_{\sigma}G)$-bimodule and $H_{\bullet}^\mathrm{par}(G,-)$ denotes the partial group homology from \cite{Article_Alves-Dokuchaev-Kochloukova_2020_HACVTPGA}.  Although the construction in \cite{Article_Dokuchaev-Jerez_2023_TTPGAAOPCP} makes essential use of the twist $\sigma$, the twist itself does not explicitly appear in the resulting spectral sequence.

The main achievements of this paper is to present both spectral sequences within the unified framework of epsilon‑strongly graded algebras and to give a detailed description of the components of the homology spectral sequence,
thereby generalizing the results of \cite{Article_Lorenz_1992_OHGA} on strongly graded algebras.
We show that each spectral sequence depends only on the underlying epsilon‑strong grading of the algebra, and we derive general spectral sequences that compute the Hochschild (co)homology of any epsilon‑strongly graded algebra.

Section \ref{sec: preliminaries} reviews partial group representations and epsilon‑strong gradings.
In Section \ref{sec: homology of epsilon-strongly graded algebras}, Theorem~\ref{t: homology spectral sequence of an epsilon ring} and Theorem~\ref{t: cohomology spectral sequence of an epsilon ring} establish first‑ and second quadrant spectral sequences converging to the Hochschild homology and cohomology of an epsilon‑strongly \(G\)-graded algebra $S$.
These sequences are interpreted in terms of the partial group (co)homology of $G$ (see Definition~\ref{d: partial group cohomology}) with coefficients in the Hochschild (co)homology of degree-one component \(A:=S_1\) of \(S\).

In Section \ref{sec: splitting by conjugate classes} we show how the homology spectral sequence splits into summands indexed by conjugacy classes of \(G\), yielding the decomposition of Theorem~\ref{t: spectral sequence split}.
In the second part of this section we use the globalization functor \(\Lambda: \lmod{K_\mathrm{par}G} \to \lmod{KG}\)
(from \cite[Theorem 3.24]{PhDThesis_Jerez_2024}) to recover and extend Lorenz’s classical results \cite{Article_Lorenz_1992_OHGA} on the homology of strongly graded algebras.
Then Theorem~\ref{t: main} shows that,
on the second page of the homology spectral sequence,
each term \(H_p^{\mathrm{par}}(G,H_q(A,M_{\overline g}))\) is identified with the group homology of the centralizer $\mathcal C_g$ of \(g \in G\),
and—under mild hypotheses—the computation further reduces to the partial group homology of $\mathcal C_g$ itself.

\section{Preliminaries}
\label{sec: preliminaries}

From now on, we will assume that \(K\) is a commutative ring with identity and \(G\) is an abstract group.
We use the words \textit{module} and \textit{algebra} to refer to a \(K\)-module and a \(K\)-algebra with unit, respectively,
unless otherwise specified.

\subsection{Partial representations of groups}

We start by introducing the basic concepts of partial group representations used in this work, based on \cite{Book_Exel_2017_PDSFBAA}.

A \textbf{partial representation} of $G$ on the $K$-module $M$ is a map $ \pi : G \rightarrow \operatorname{End}_K(M)$ such that, for any $s,t \in G$, we have:
\begin{enumerate}[(a)]
    \itemsep0em 
    \item \(\pi_{s} \pi_{t} \pi_{t^{-1}} = \pi_{st} \pi_{t^{-1}}\),
    \item \(\pi_{s^{-1}} \pi_{s} \pi_{t} = \pi_{s^{-1}} \pi_{st}\),
    \item \(\pi_{1_G} = \operatorname{id}_{M}\).
\end{enumerate}
Let $\pi: G \rightarrow \operatorname{End}_K(M)$ and $\pi': G \rightarrow \operatorname{End}_K(W)$ be two partial representations of $G$.
A \textbf{morphism of partial representations} is a morphism of $K$-modules $f: M \rightarrow W$,
such that $f \circ \pi_{g} = \pi'_{g} \circ f$ $\forall g \in G$.

We write $\mathbf{ParRep}_{G}$ for the category of partial representations of $G$.  Let $\mathcal{S}(G)$ be Exel’s semigroup of $G$, i.e., the inverse monoid generated by the set of symbols $\{[t]\mid t\in G\}$ with relations
\[
    [1_G]=1,\qquad 
    [s^{-1}][s][t]=[s^{-1}][st],\qquad 
    [s][t][t^{-1}]=[st][t^{-1}].
\]
For $w\in\mathcal{S}(G)$, write $w^*$ for its inverse, and denote the set of idempotent elements of \(\mathcal{S}(G)\) by \(E(\mathcal{S}(G))\).
It is well known that \(E(\mathcal{S}(G))\) is the subsemigroup of \(\mathcal{S}(G)\) generated by the set \(\{ e_g = [g][g^{-1}] : g \in G \}\).
The \textbf{partial group algebra} $K_{\mathrm{par}}G$ of \(G\) is the semigroup $K$-algebra generated by \(\mathcal{S}(G)\),
and $\mathcal{B}\subset K_{\mathrm{par}}G$ its commutative subalgebra generated by $E(\mathcal{S}(G))$.
The generators $[g]$ and basic idempotents $e_g$ of \(K_\mathrm{par}G\) satisfy the following relations, which we will use throughout the paper:
\begin{lemma}[Proposition 9.8 \cite{Book_Exel_2017_PDSFBAA}] \label{p: computations rules}
    The following equalities hold in \(K_\mathrm{par}G\) for all \(g,h \in G\):
    \begin{enumerate}[(i)]
        \item $[g]e_h = e_{gh}[g]$,
        \item \([g]e_{h^{-1}}[g^{-1}] = e_{gh}e_{g}\),
        \item $e_g e_h = e_h e_g$.
    \end{enumerate}
\end{lemma}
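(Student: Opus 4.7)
All three identities hold already inside Exel's inverse semigroup $\mathcal{S}(G)$, and since $K_{\mathrm{par}}G$ is defined as the semigroup $K$-algebra of $\mathcal{S}(G)$ they automatically transport to that algebra. The plan is therefore to verify each of (i)--(iii) using only the three defining relations
\[
[1_G]=1,\qquad [s^{-1}][s][t]=[s^{-1}][st],\qquad [s][t][t^{-1}]=[st][t^{-1}].
\]

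For (i) I would expand both sides via $e_g=[g][g^{-1}]$. The left-hand side is $[g][h][h^{-1}]$, which the third relation (with $s=g$, $t=h$) rewrites as $[gh][h^{-1}]$. The right-hand side is $[gh][h^{-1}g^{-1}][g]$; reading it as $[s^{-1}][s][t]$ with $s=h^{-1}g^{-1}$ (so $s^{-1}=gh$) and $t=g$, the second relation collapses it to $[gh][h^{-1}g^{-1}g]=[gh][h^{-1}]$, matching the left side.

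For (ii) no new relation is needed: multiplying the identity of (i) on the right by $[g^{-1}]$ and using $e_g=[g][g^{-1}]$ gives $[g]e_h[g^{-1}]=e_{gh}e_g$; the form stated in the lemma is then a matter of substituting the subscript accordingly. So (ii) is essentially a corollary of (i).

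For (iii) I would chain (i) and (ii). Applying (i) with $g$ replaced by $g^{-1}$ yields $[g^{-1}]e_h=e_{g^{-1}h}[g^{-1}]$, hence
\[
e_g e_h=[g][g^{-1}]e_h=[g]e_{g^{-1}h}[g^{-1}];
\]
then (ii), applied to the inner factor with index $k=g^{-1}h$, rewrites the right-hand side as $e_{g(g^{-1}h)}e_g=e_h e_g$. The main obstacle is exactly this last point: unlike (i) and (ii), (iii) is not a single-step consequence of the defining relations and requires genuinely chaining the previous two identities. Once that is done, the commutativity of the basic idempotents—and with it the inverse-semigroup structure of $\mathcal{S}(G)$—follows by routine bookkeeping.
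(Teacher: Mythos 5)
The paper does not actually prove this lemma: it is quoted verbatim as Proposition 9.8 of Exel's book and used as a black box, so there is no internal argument to measure yours against. Your strategy---verify the identities inside $\mathcal{S}(G)$ from the three defining relations and let them transport to the semigroup algebra $K_{\mathrm{par}}G$---is the standard one and is exactly how the cited source proceeds. Your computations for (i) and (iii) are correct: both sides of (i) reduce to $[gh][h^{-1}]$, and the chain $e_g e_h = [g][g^{-1}]e_h = [g]e_{g^{-1}h}[g^{-1}] = e_{g(g^{-1}h)}e_g = e_h e_g$ is valid, using the identity you actually derive in step (ii).

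The one point you should not wave away is the subscript in (ii). What you prove is $[g]e_{h}[g^{-1}] = e_{gh}e_{g}$, which is the identity appearing in Exel's Proposition 9.8 and the one your proof of (iii) relies on. But the lemma as printed reads $[g]e_{h^{-1}}[g^{-1}] = e_{gh}e_{g}$, and ``substituting the subscript accordingly'' (i.e.\ $h\mapsto h^{-1}$) turns your identity into $[g]e_{h^{-1}}[g^{-1}] = e_{gh^{-1}}e_{g}$, whose right-hand side is $e_{gh^{-1}}e_{g}$, not $e_{gh}e_{g}$. These are different elements in general: taking $g=1_G$ in the printed statement would force $e_{h^{-1}}=e_{h}$ for every $h$, which fails in $\mathcal{S}(G)$ whenever $h$ has order greater than two, since the $e_{h}$ generate the idempotent semilattice freely subject only to commutativity. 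So the printed version of (ii) contains a typo; your argument establishes the correct identity but does not---and cannot---establish the literal one. You should state explicitly that you are proving $[g]e_{h}[g^{-1}]=e_{gh}e_{g}$ and flag the discrepancy with the statement, rather than passing it off as a harmless relabelling.
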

The \(K\)-submodule \(\mathcal{B}\subseteq K_{\mathrm{par}}G\) carries left- and right-$K_{\mathrm{par}}G$–module structures (though not a bimodule) via
\[
    [g]\triangleright u=[g]\,u\,[g^{-1}],
    \quad
    u\triangleleft[g]=[g^{-1}]\,u\,[g],
\]
for all $g\in G$ and $u\in\mathcal{B}$.
One verifies that
\begin{enumerate}[(i)]
    \item $w \triangleright u = wu$ for all $w, u \in \mathcal{B}$,
    \item $u \triangleleft w = wu$ for all $w, u \in \mathcal{B}$,
    \item $[g] \triangleright 1 = e_{g}$ for all $g \in G$,
    \item $1 \triangleleft [g] = e_{g^{-1}}$ for all $g \in G$.
\end{enumerate}

Hence, we obtain a well-defined morphism of right $K_\mathrm{par}G$-modules
\begin{align} \label{eq: epsilon map}
    \varepsilon: K_\mathrm{par}G   &\to \mathcal{B} \\
                    z       &\mapsto  1 \triangleleft z. \notag
\end{align}


\begin{proposition}[Proposition 10.5 \cite{Book_Exel_2017_PDSFBAA}] \label{p: KparG universal property} 
    The map
    \[
       g \in G \mapsto [g] \in  K_\mathrm{par}G
    \]
    is a partial representation, which we will call the \textbf{universal partial representation} of $G$. In addition, for any partial representation $\pi$ of $G$ in a unital $K$-algebra $A$ there exists a unique algebra homomorphism $\phi : K_\mathrm{par}G \rightarrow A$, such that $\pi(g) = \phi([g])$, for any $g \in G.$
\end{proposition}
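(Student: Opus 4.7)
The plan for this proof splits along the two assertions in the proposition, and in both cases reduces to matching the defining relations of $\mathcal{S}(G)$ against the axioms of a partial representation.

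First, I would verify that $g\mapsto [g]$ satisfies axioms (a)--(c) of a partial representation, now taking values in the unital $K$-algebra $K_\mathrm{par}G$. This is essentially reading off the presentation of $\mathcal{S}(G)$ recalled just above the statement: the relation $[1_G]=1$ is axiom (c), the relation $[s^{-1}][s][t]=[s^{-1}][st]$ is axiom (b), and the relation $[s][t][t^{-1}]=[st][t^{-1}]$ is axiom (a). So there is nothing to check beyond unpacking the presentation.

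For the universal property, the strategy is to chain two standard universal properties. Given a partial representation $\pi:G\to A$ in a unital $K$-algebra, I would first set $\tilde\pi([g])=\pi(g)$ on the generators of $\mathcal{S}(G)$, and argue that it extends to a monoid homomorphism $\tilde\pi:\mathcal{S}(G)\to (A,\cdot)$. The key observation is precisely the one used in the first part: the three defining relations of $\mathcal{S}(G)$ are, word for word, the multiplicative forms of axioms (a), (b), (c), so the partial representation axioms are exactly what is needed to guarantee that $\tilde\pi$ respects the relations and hence is well defined. Then, since $K_\mathrm{par}G=K\mathcal{S}(G)$ by definition, the universal property of the semigroup algebra extends $\tilde\pi$ uniquely to a $K$-algebra homomorphism $\phi:K_\mathrm{par}G\to A$ with $\phi([g])=\pi(g)$. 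Uniqueness of $\phi$ is immediate: the elements $[g]$ generate $\mathcal{S}(G)$ as a monoid and therefore generate $K_\mathrm{par}G$ as a $K$-algebra, so any two algebra maps agreeing on them coincide.

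There is no real obstacle here; the only subtle point I would flag in the write-up is that axiom (c) is needed to make $\tilde\pi$ unital, i.e.\ a \emph{monoid} (rather than merely semigroup) homomorphism, which is what the semigroup algebra universal property requires in order to produce a unital $K$-algebra map.
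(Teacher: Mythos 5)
The paper does not prove this proposition; it is imported verbatim from Exel's book (Proposition 10.5 there), so there is no in-paper argument to compare against. Your proof is correct and is the standard one: the three defining relations of $\mathcal{S}(G)$ are exactly the multiplicative forms of axioms (a)--(c), so $g\mapsto[g]$ is a partial representation, and conversely any partial representation $\pi$ in a unital algebra respects those relations, hence factors through the monoid $\mathcal{S}(G)$ and then, by linearity, through the monoid algebra $K_\mathrm{par}G=K\mathcal{S}(G)$; uniqueness follows since the $[g]$ generate. Your remark that axiom (c) is what makes the map unital is the right thing to flag. The only point I would add to the write-up: the paper describes $\mathcal{S}(G)$ as the \emph{inverse} monoid on these generators and relations, whereas your argument uses the universal property of the plain monoid presentation (a map of generators into an arbitrary monoid respecting the relations extends). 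These agree because the universal monoid with this presentation is already an inverse monoid --- that is Exel's theorem --- but strictly speaking you are invoking the monoid presentation, and it is worth saying so explicitly, since a presentation in the category of inverse monoids would a priori only give extensions to inverse-monoid targets, and $(A,\cdot)$ is not one.
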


A direct consequence of Proposition~\ref{p: KparG universal property} is the following well-known theorem:

\begin{theorem} \label{t: partial representations and KparG-modules isomorphism}
    The categories $\textbf{ParRep}_G$ and $K_\mathrm{par}G$-\textbf{Mod} are isomorphic.
\end{theorem}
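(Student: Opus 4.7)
The plan is to exhibit mutually inverse functors $F\colon \mathbf{ParRep}_G\to \lmod{K_{\mathrm{par}}G}$ and $G\colon \lmod{K_{\mathrm{par}}G}\to\mathbf{ParRep}_G$ and observe that they are the identity on the underlying $K$-modules and on morphisms, so the correspondence is an isomorphism of categories rather than merely an equivalence.

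For $F$, start with a partial representation $\pi\colon G\to \operatorname{End}_K(M)$. Since $\operatorname{End}_K(M)$ is a unital $K$-algebra, Proposition~\ref{p: KparG universal property} yields a unique $K$-algebra homomorphism $\phi_\pi\colon K_\mathrm{par}G\to\operatorname{End}_K(M)$ with $\phi_\pi([g])=\pi_g$; this is exactly a left $K_\mathrm{par}G$-module structure on $M$. Define $F(\pi)$ to be $M$ with this action. Given a morphism $f\colon M\to W$ of partial representations, the relation $f\circ\pi_g=\pi'_g\circ f$ extends (by $K$-linearity and multiplicativity, since the $[g]$ generate $K_\mathrm{par}G$ as a $K$-algebra) to $f\circ\phi_\pi(z)=\phi_{\pi'}(z)\circ f$ for every $z\in K_\mathrm{par}G$, so $f$ is a morphism of $K_\mathrm{par}G$-modules; set $F(f):=f$. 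Functoriality is immediate.

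For $G$, start with a left $K_\mathrm{par}G$-module $M$, with structure map $\psi\colon K_\mathrm{par}G\to\operatorname{End}_K(M)$. Composing $\psi$ with the universal partial representation $g\mapsto[g]$ from Proposition~\ref{p: KparG universal property} produces a map $\pi_\psi\colon G\to\operatorname{End}_K(M)$, $\pi_\psi(g):=\psi([g])$. Because $g\mapsto[g]$ already satisfies the three defining relations of a partial representation in $K_\mathrm{par}G$ and $\psi$ is a $K$-algebra homomorphism, $\pi_\psi$ is a partial representation. Define $G(M)$ to be $M$ equipped with $\pi_\psi$, and $G(f):=f$ on morphisms; any $K_\mathrm{par}G$-linear map in particular commutes with the action of each $[g]$, so it is a morphism of partial representations.

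It remains to verify $F\circ G=\operatorname{id}$ and $G\circ F=\operatorname{id}$. For $G\circ F$, starting from $\pi$ one builds $\phi_\pi$ and then extracts $g\mapsto\phi_\pi([g])=\pi_g$, recovering $\pi$. For $F\circ G$, starting from $\psi$ one extracts $\pi_\psi(g)=\psi([g])$, and then the universal property produces the unique algebra map $K_\mathrm{par}G\to\operatorname{End}_K(M)$ sending $[g]$ to $\psi([g])$; uniqueness forces this map to be $\psi$ itself. Both compositions are the identity on morphisms by construction. I do not anticipate a serious obstacle here: the entire theorem is essentially a repackaging of the universal property of Proposition~\ref{p: KparG universal property}, and the only point requiring care is to check that the generators $[g]$ (together with $1$) really do generate $K_\mathrm{par}G$ as a $K$-algebra, which follows from its definition as a semigroup algebra of $\mathcal{S}(G)$ and the fact that every word in $\mathcal{S}(G)$ is a product of the symbols $[g]$.
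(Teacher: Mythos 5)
Your proposal is correct and follows the same route the paper takes: the paper only sketches the constructions $[g]\cdot m := \pi_g(m)$ and $\pi_g(m) := [g]\cdot m$ and cites Proposition~\ref{p: KparG universal property}, which is exactly the universal-property argument you spell out in full. The only (cosmetic) quibble is that naming your second functor $G$ collides with the group $G$.
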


The key constructions in Theorem~\ref{t: partial representations and KparG-modules isomorphism} are as follows: if $\pi: G \to \operatorname{End}_K(M)$ is a partial group representation, then $M$ becomes a $K_\mathrm{par}G$-module with the action defined as $[g] \cdot m := \pi_g(m)$. Conversely, if $M$ is a $K_\mathrm{par}G$-module, we obtain a partial representation $\pi: G \to \operatorname{End}_K(M)$ such that $\pi_g(m):= [g] \cdot m$. Keeping in mind Theorem~\ref{t: partial representations and KparG-modules isomorphism}, we use the terms \textit{partial group representation} and $K_\mathrm{par}G$\textit{-module} interchangeably.

To conclude the review of partial group representations,
we recall the definition of partial group (co)homology,
defined using the partial group algebra $K_{\mathrm{par}}G$ and its commutative subalgebra $\mathcal{B}$.
\begin{definition}[\hspace{-0.005em}\cite{Article_Alvares-Alves-Redondo_2017_COPSP}, \cite{Article_Alves-Dokuchaev-Kochloukova_2020_HACVTPGA}]
    \label{d: partial group cohomology}
    Let $G$ be a group, we define the partial group homology of $G$ with coefficients in a left $K_\mathrm{par}G$-module $M$ as
    \[
        H_{\bullet}^\mathrm{par}(G,M) := \operatorname{Tor}_{\bullet}^{K_\mathrm{par}G}(\mathcal{B},M).
    \]
    Analogously, we define the partial group cohomology of \(G\) with coefficients in \(M\) as
    \[
        H^\bullet_\mathrm{par}(G,M) := \operatorname{Ext}^\bullet_{K_\mathrm{par}G}(\mathcal{B}, M).
    \]
\end{definition}

\subsection{Epsilon-strongly graded algebras}
Following \cite{Article_Nystedt-Oinert-Pinedo_2018_EGRSAS},
a unital \(G\)-graded algebra \(S\) is \emph{epsilon-strongly graded} if:
    \begin{enumerate}[(i)]
        \item \(S_{g}S_{g^{-1}}\) is a unital ideal of \(S_1\) for all \(g \in G\),
        \item \(S_{g^{-1}}S_{g}S_{h}  = S_{g^{-1}}S_{gh}\) for all \(g,h \in G\),
        \item \(S_{h}S_{g}S_{g^{-1}} = S_{hg} S_{g^{-1}}\) for all \(g,h \in G\).
    \end{enumerate}
We denote by $1_{g}$ the unit of $S_{g}S_{g^{-1}}$, and by $s_g$ an arbitrary homogeneous element of degree $g$ in $S$ (i.e., $s_g \in S_g$).
For all \(g \in G\) there exists a finite family \(\{ (L_{g}^{i}, R_{g^{-1}}^{i}) \in S_{g} \times S_{g^{-1}}\}\) such that
\begin{equation}
    1_{g} = \sum_{i} L_{g}^{i} R_{g^{-1}}^{i}.
    \label{eq: 1g sum of LR}
\end{equation}
Simple computations show that

\vspace*{-10pt}
\begin{minipage}[t]{0.45\textwidth}
    \begin{equation}
        1_{g}s_{g} = s_{g} = s_{g}1_{g^{-1}} \label{eq: epsilon-strongly graded algebras properties 1}
    \end{equation}
\end{minipage}
\begin{minipage}[t]{0.45\textwidth}
    \begin{equation}
        s_{g}1_{h} = 1_{gh}s_{g}, \label{eq: epsilon-strongly graded algebras properties 2}
    \end{equation}
\end{minipage}

\medskip
\noindent for all \(g,h \in G\) and \(s_{g} \in S_g\).

\section{Homology of epsilon-strongly graded algebras}
\label{sec: homology of epsilon-strongly graded algebras}

Let $S$ be a unital, epsilon‑strongly $G$‑graded algebra over a commutative ring $K$.
Write
\[
  A \;=\; S_{1}, 
  \quad
  A_{g} \;=\; S_{g}\,S_{g^{-1}}
  \quad (g\in G).
\]
Henceforth we shall assume that $A$ is projective as a $K$‑module.

\subsection{Homology}
\label{sub:homology}

Our objective is to describe the homology of \(S\) in terms of the homology of \(A\),
with this in mind, we define the functor
\begin{equation}
    \mathcal{F} := A \otimes_{A^{e}} - : \lmod{S^{e}} \to \lmod{K}
    \label{eq: F1 functor}
\end{equation}
Note that the functor \(\mathcal{F}\) differs from \(H_{0}(A, -)\) in its source category.  
Hence, it is possible that \(\operatorname{L}_{\bullet} \mathcal{F} (M) \ncong H_{\bullet}(A, M)\) may not hold. 

\begin{proposition} \label{p: F HA isomorphism}
    Let \(S\) be a epsilon-strongly \(G\)-graded algebra.
    Then,
    \begin{enumerate}[(i)]
        \item \(S\) is projective as left and right \(A\)-module,
        \item \(S^{e}\) is projective as \(A^{e}\)-module,
        \item \(\mathbf{L}_{\bullet}\mathcal{F}(M) \cong H_{\bullet}(A, M)\) for every \(S^{e}\)-module \(M\).
    \end{enumerate}
\end{proposition}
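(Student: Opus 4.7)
The plan is to establish the three parts in order, with (ii) reducing to (i) and (iii) reducing to (ii).

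For (i), the crucial input is the local unit decomposition $1_g = \sum_i L_g^i R_{g^{-1}}^i$ from \eqref{eq: 1g sum of LR}, which provides an explicit finite dual basis for $S_g$ over $A$. Concretely, define $\phi_g: S_g \to A^n$ by $s_g \mapsto (R_{g^{-1}}^i s_g)_i$ (the components lie in $S_{g^{-1}}S_g \subseteq A$, so $\phi_g$ is well-defined) and $\psi_g: A^n \to S_g$ by $(a_i) \mapsto \sum_i L_g^i a_i$. Both maps are right $A$-linear and $\psi_g \circ \phi_g (s_g) = 1_g s_g = s_g$ by \eqref{eq: epsilon-strongly graded algebras properties 1}, so $S_g$ is a finitely generated projective right $A$-module. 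A symmetric argument using $1_{g^{-1}} = \sum_j L_{g^{-1}}^j R_g^j$ handles the left-module side, and summing over $g \in G$ yields (i).

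For (ii), I would deduce two-sided projectivity from (i) by a tensor-product argument over $K$. If $S$ is a direct summand of $A^{(I)}$ as a left $A$-module and of $A^{(J)}$ as a right $A$-module, then $S^e = S \otimes_K S^{\mathrm{op}}$ is a direct summand of $A^{(I)} \otimes_K (A^{\mathrm{op}})^{(J)} \cong (A^e)^{(I \times J)}$ as $A^e$-modules, hence projective over $A^e$.

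Part (iii) is then formal. The ring inclusion $A^e \hookrightarrow S^e$ induces an exact restriction functor $\lmod{S^e} \to \lmod{A^e}$ which, by (ii), carries projectives to projectives (a projective $S^e$-module is a summand of a free $S^e$-module, which in turn is $A^e$-projective). Consequently, any projective resolution $P_\bullet \to M$ in $\lmod{S^e}$ remains a projective resolution in $\lmod{A^e}$ of $M$ viewed with its underlying $A^e$-structure, and applying $A \otimes_{A^e} -$ computes simultaneously $\mathbf{L}_\bullet \mathcal{F}(M)$ (by definition of the left derived functor) and $\operatorname{Tor}^{A^e}_\bullet(A, M) = H_\bullet(A, M)$.

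The only genuinely substantive step is (i); the main bookkeeping obstacle there is to keep track of the asymmetry between $S_{g^{-1}}S_g$ and $S_g S_{g^{-1}}$ and to choose the correct pair $(L_g^i, R_{g^{-1}}^i)$ versus $(L_{g^{-1}}^j, R_g^j)$ for each one-sided structure. Once (i) is secured, (ii) and (iii) follow by the standard formal arguments sketched above.
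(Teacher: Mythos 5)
Your proposal is correct and follows the same three-step architecture as the paper: one-sided projectivity of $S$ over $A$, projectivity of $S^{e}$ over $A^{e}$, and the observation that an $S^{e}$-projective resolution of $M$ restricts to an $A^{e}$-projective resolution, so that applying $A\otimes_{A^{e}}-$ to it computes both $\mathbf{L}_{\bullet}\mathcal{F}(M)$ and $\operatorname{Tor}^{A^{e}}_{\bullet}(A,M)=H_{\bullet}(A,M)$. The one genuine difference is in part (i): the paper factors the claim through the intermediate ring $A_{g}=S_{g}S_{g^{-1}}$ --- using that $A_{g}$ is a unital ideal and hence a direct summand of $A$, quoting \cite[Proposition 7]{Article_Nystedt-Oinert-Pinedo_2018_EGRSAS} for projectivity of $S_{g}$ over $A_{g}$ and $A_{g^{-1}}$, and then invoking transitivity of projectivity from \cite[Proposition 1.4]{Book_De-Ingraham_2006_SAOCR} --- whereas you exhibit an explicit finite dual basis for $S_{g}$ over $A$ directly from \eqref{eq: 1g sum of LR}. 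Your maps $\phi_{g}$ and $\psi_{g}$ are well defined and correctly matched to the two one-sided structures: the splitting $\psi_{g}\circ\phi_{g}=\mathrm{id}$ uses $1_{g}s_{g}=s_{g}$ for the right module structure and $s_{g}1_{g^{-1}}=s_{g}$ for the left one, exactly as in \eqref{eq: epsilon-strongly graded algebras properties 1}. This makes your route self-contained and avoids both citations; what it gives up is only the reusable intermediate fact that $S_{g}$ is projective over the smaller ring $A_{g}$, which the paper records implicitly. Your explicit tensor-of-summands argument for (ii) is precisely the ``straightforward dual basis argument'' the paper leaves to the reader, and your (iii) coincides with the paper's proof.
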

\begin{proof}
    Since \(A_{g}\) is a unital ideal of \(A\), then
    \[
        A = A_{g} \oplus (1-1_{g})A,
    \]
    whence we conclude that each \(A_{g}\) is projective as left and right \(A\)-module.
    By \cite[Proposition 7]{Article_Nystedt-Oinert-Pinedo_2018_EGRSAS}, \(S_{g}\) is projective as both a left \(A_{g}\)-module and right \(A_{g^{-1}}\)-module.
    Moreover, by \cite[Proposition 1.4]{Book_De-Ingraham_2006_SAOCR}, each \(S_{g}\) is projective as both a left and right \(A\)-module.
    Hence, \(S\) is projective as both a left and right \(A\)-module.
    This concludes the proof of (i).
    The proof of (ii) follows from a straightforward dual basis argument.
    Finally, by (ii) and \cite[Proposition 1.4]{Book_De-Ingraham_2006_SAOCR}, we know that any projective resolution of \(M\) as an \(S^{e}\)-module is also a projective resolution of \(M\) as an \(A^{e}\)-module, which implies (iii).
\end{proof}

Let \(M\) be an \(S^{e}\)-module.
Although \(M\) is not necessarily a \(K_\mathrm{par}G\)-module, we can define a partial representation of \(G\) on \(A \otimes_{A^{e}} M\).
For each $g\in G$, choose a family \(\{(L^i_g,R^i_{g^{-1}})\}_i\subset S_g\times S_{g^{-1}}\) satisfying \eqref{eq: 1g sum of LR},
and define \(\pi_{g}:G \to \operatorname{End}_{K}(M)\) such that
\begin{equation}\label{eq: partial representation on H0}
    \pi_{g}(a \otimes_{A^{e}} m) := \sum_{i} 1 \otimes_{A^{e}}R_{g}^{i} a m L_{g^{-1}}^{i}.
\end{equation}
To show that \(\pi_{g}\) is well‑defined on the tensor product over \(A^{e}\), one computes:
\begin{align*}
    \sum_{i} 1 \otimes_{A^{e}}R_{g}^{i} a m L_{g^{-1}}^{i} 
    &= \sum_{i} 1 \otimes_{A^{e}}R_{g}^{i} a 1_{g^{-1}} m L_{g^{-1}}^{i}
    = \sum_{i,j} 1 \otimes_{A^{e}}R_{g}^{i} a L_{g^{-1}}^{j} R_{g}^{j} m L_{g^{-1}}^{i} \\
    &= \sum_{i,j} 1 \otimes_{A^{e}}R_{g}^{j} m L_{g^{-1}}^{i} R_{g}^{i} a L_{g^{-1}}^{j}
    = \sum_{j} 1 \otimes_{A^{e}}R_{g}^{j} m 1_{g^{-1}} a L_{g^{-1}}^{j} \\
    &= \sum_{j} 1 \otimes_{A^{e}}R_{g}^{j} m a L_{g^{-1}}^{j}.
\end{align*}
Furthermore, analogously to the above computations we show that the definition of \(\pi_{g}\) does not depend on the choice of the \((L_{g}^{i}, R_{g^{-1}}^{i})\).
Indeed, if \(\{ (U_{g}^{i}, W_{g^{-1}}^{i}) \in S_{g} \times S_{g^{-1}}\}\) also satisfies \eqref{eq: 1g sum of LR}, then we have
\begin{align*}
    \sum_{i} 1 \otimes_{A^{e}}R_{g}^{i} m L_{g^{-1}}^{i} 
    &= \sum_{i} 1 \otimes_{A^{e}}R_{g}^{i} 1_{g^{-1}} m L_{g^{-1}}^{i}
    = \sum_{i,j} 1 \otimes_{A^{e}}R_{g}^{i} U_{g^{-1}}^{j} W_{g}^{j} m L_{g^{-1}}^{i} \\
    &= \sum_{i,j} 1 \otimes_{A^{e}}W_{g}^{j} m L_{g^{-1}}^{i} R_{g}^{i} U_{g^{-1}}^{j}
    = \sum_{j} 1 \otimes_{A^{e}}W_{g}^{j} m 1_{g^{-1}} U_{g^{-1}}^{j} \\
    &= \sum_{j} 1 \otimes_{A^{e}}W_{g}^{j} m U_{g^{-1}}^{j}.
\end{align*}

\begin{proposition} \label{p: partial representation on coinvariants}
    Let \(M\) be an \(S^{e}\)-module.
    Then, the map \(\pi: G \to \operatorname{End}_{K}(A \otimes_{A^{e}} M)\) defined by \eqref{eq: partial representation on H0} is a partial representation.
\end{proposition}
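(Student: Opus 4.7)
The plan is to verify the three partial-representation axioms (a), (b), (c) for the map $\pi$ by direct computation on a generator $a\otimes_{A^{e}} m$. Throughout I would rely on the independence of $\pi_{g}$ from the choice of dual basis satisfying \eqref{eq: 1g sum of LR}, which was just established by the well-definedness computation above.

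Axiom (c), $\pi_{1_{G}}=\mathrm{id}$, is the easiest: taking for $1_{1_{G}}=1_{S}$ the one-term trivial factorisation $1=1\cdot 1$, formula \eqref{eq: partial representation on H0} immediately gives $\pi_{1_{G}}(a\otimes m)=1\otimes 1\cdot am\cdot 1 = a\otimes m$.

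For axiom (a), $\pi_{s}\pi_{t}\pi_{t^{-1}}=\pi_{st}\pi_{t^{-1}}$, I would apply the three operators in succession to $a\otimes m$, producing the triple sum
\[
\sum_{i,j,k}\,1\otimes R^{k}_{s} R^{j}_{t} R^{i}_{t^{-1}}\,am\,L^{i}_{t} L^{j}_{t^{-1}} L^{k}_{s^{-1}},
\]
and compare it with the double sum obtained from $\pi_{st}\pi_{t^{-1}}(a\otimes m)$. The essential step is to collapse the inner sum over $j$: the factor $\sum_{j} R^{j}_{t}(\cdots)L^{j}_{t^{-1}}$ represents an insertion of the idempotent $1_{t}$. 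Using \eqref{eq: epsilon-strongly graded algebras properties 1}--\eqref{eq: epsilon-strongly graded algebras properties 2} together with condition (iii) of the epsilon-strong grading, $S_{s}S_{t}S_{t^{-1}}=S_{st}S_{t^{-1}}$, the pairs $\{(R^{k}_{s}R^{j}_{t},\,L^{j}_{t^{-1}}L^{k}_{s^{-1}})\}_{k,j}$ can be shown to constitute a (non-canonical) dual-basis decomposition for $1_{(st)^{-1}}$ in the sense of \eqref{eq: 1g sum of LR}. The independence of $\pi_{st}$ from the dual-basis choice then identifies the two sides. Axiom (b), $\pi_{s^{-1}}\pi_{s}\pi_{t}=\pi_{s^{-1}}\pi_{st}$, is handled symmetrically, now using condition (ii), $S_{g^{-1}}S_{g}S_{h}=S_{g^{-1}}S_{gh}$, in place of (iii).

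The principal obstacle is precisely this bookkeeping: verifying that the products $\{R^{k}_{s}R^{j}_{t}\}_{k,j}$ and $\{L^{j}_{t^{-1}}L^{k}_{s^{-1}}\}_{k,j}$ genuinely assemble into a valid dual-basis system for $1_{(st)^{-1}}$ (and the analogous check for axiom (b)). Once this rearrangement is set up, each axiom reduces to a routine manipulation with the relations \eqref{eq: epsilon-strongly graded algebras properties 1}--\eqref{eq: epsilon-strongly graded algebras properties 2} and the defining grading conditions.
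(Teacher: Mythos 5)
There is a genuine gap at precisely the step you flag as the principal obstacle. The pairs $\{(R_{s}^{k}R_{t}^{j},\,L_{t^{-1}}^{j}L_{s^{-1}}^{k})\}_{k,j}$ do \emph{not} form a dual-basis decomposition of $1_{(st)^{-1}}$ in the sense of \eqref{eq: 1g sum of LR}: using \eqref{eq: epsilon-strongly graded algebras properties 2} one computes
\[
\sum_{k,j} L_{t^{-1}}^{j}\,L_{s^{-1}}^{k}\,R_{s}^{k}\,R_{t}^{j}
=\sum_{j} L_{t^{-1}}^{j}\,1_{s^{-1}}\,R_{t}^{j}
=1_{(st)^{-1}}\,1_{t^{-1}},
\]
which is in general a proper sub-idempotent of $1_{(st)^{-1}}$ (equality would force $A_{(st)^{-1}}\subseteq A_{t^{-1}}$). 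For a strongly graded algebra all $1_{g}=1_{S}$ and your claim would hold, so the failure occurs exactly where the epsilon-strong setting differs from the strong one. Consequently the independence-of-choice lemma, which applies only to families satisfying \eqref{eq: 1g sum of LR} on the nose, cannot be invoked as stated, and the same defect appears in your treatment of axiom (b). The strategy is salvageable, but only with an additional absorption argument: in the composites $\pi_{s}\pi_{t}\pi_{t^{-1}}$ and $\pi_{st}\pi_{t^{-1}}$ the remaining factor $\pi_{t^{-1}}$ produces elements of the form $\sum_i 1\otimes_{A^{e}}R_{t^{-1}}^{i}am L_{t}^{i}$, which already absorb the discrepancy idempotent $1_{t^{-1}}$ because $R_{t^{-1}}^{i}=1_{t^{-1}}R_{t^{-1}}^{i}$ and $L_{t}^{i}=L_{t}^{i}1_{t^{-1}}$; verifying this is an extra computation you have not supplied, and it is essentially the whole content of the proof.

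For comparison, the paper takes a different and shorter route that sidesteps the dual-basis matching entirely: it first establishes the identity $\pi_{g}\pi_{g^{-1}}(1\otimes_{A^{e}}m)=1_{g}\otimes_{A^{e}}m$, uses it to reduce the left-hand side of axiom (b) to $\sum_{i}1_{g^{-1}}\otimes_{A^{e}}R_{h}^{i}mL_{h^{-1}}^{i}$, and then evaluates $\pi_{g^{-1}}\pi_{gh}$ directly by inserting $1_{h^{-1}}$ via \eqref{eq: 1g sum of LR}, cycling the degree-one factor $R_{g^{-1}}^{j}R_{gh}^{i}L_{h^{-1}}^{n}\in A$ around the tensor product over $A^{e}$, and collapsing the resulting sums with \eqref{eq: epsilon-strongly graded algebras properties 1} and \eqref{eq: epsilon-strongly graded algebras properties 2}. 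Your verification of axiom (c) is correct.
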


\begin{proof}
    It is clear that \(\pi_{1} = \operatorname{id}_{M}\).
    Now observe that if \(g \in G\) and \(m \in M\),
    then
    \begin{equation}\label{eq: idempotent action on A otimes M}
        \pi_{g} \pi_{g^{-1}}(1 \otimes_{A^{e}}m) = 1_{g} \otimes_{A^{e}} m.
    \end{equation}
    Indeed,
   \begin{align*}
       \pi_{g} \pi_{g^{-1}}(1 \otimes_{A^{e}} m)
       &= \sum_{i,j} 1 \otimes_{A^{e}}R_{g}^{i} R_{g^{-1}}^{j}m L_{g}^{j}L_{g^{-1}}^{i}
       = \sum_{i,j} 1 \otimes_{A^{e}} m L_{g}^{j}L_{g^{-1}}^{i}R_{g}^{i} R_{g^{-1}}^{j} \\
       &= \sum_{j} 1 \otimes_{A^{e}} m L_{g}^{j} 1_{g^{-1}} R_{g^{-1}}^{j}
       = \sum_{j} 1 \otimes_{A^{e}} m L_{g}^{j} R_{g^{-1}}^{j}\\
       &= 1_g \otimes_{A^{e}} m.
   \end{align*}
   Therefore, by \eqref{eq: idempotent action on A otimes M}, we obtain
   \[
       \pi_{g^{-1}} \pi_{g} \pi_{h}(1 \otimes_{A^{e}} m) = \sum_{i} 1_{g^{-1}} \otimes_{A^{e}} R_{h}^{i} m L_{h^{-1}}^{i}.
   \]
   On the other hand,
   \begin{align*}
       &\pi_{g^{-1}} \pi_{gh}(1 \otimes_{A^{e}} m) \\
       &= \sum_{i,j} 1 \otimes_{A^{e}} R_{g^{-1}}^{j} R_{gh}^{i} m L_{h^{-1}g^{-1}}^{i} L_{g}^{j}
       \overset{\eqref{eq: epsilon-strongly graded algebras properties 1}}{=} \sum_{i,j} 1 \otimes_{A^{e}} R_{g^{-1}}^{j} R_{gh}^{i} 1_{h^{-1}} m L_{h^{-1}g^{-1}}^{i} L_{g}^{j} \\
       & \overset{\eqref{eq: 1g sum of LR}}{=} \sum_{i,j,n} 1 \otimes_{A^{e}} R_{g^{-1}}^{j} R_{gh}^{i} L_{h^{-1}}^{n}R_{h}^{n} m L_{h^{-1}g^{-1}}^{i} L_{g}^{j} 
       = \sum_{i,j,n} 1 \otimes_{A^{e}} R_{h}^{n} m L_{h^{-1}g^{-1}}^{i} L_{g}^{j} (R_{g^{-1}}^{j} R_{gh}^{i} L_{h^{-1}}^{n}) \\
       &= \sum_{i,n} 1 \otimes_{A^{e}} R_{h}^{n} m L_{h^{-1}g^{-1}}^{i} 1_{g} R_{gh}^{i} L_{h^{-1}}^{n}
       \overset{\eqref{eq: epsilon-strongly graded algebras properties 1}}{=} \sum_{i,n} 1 \otimes_{A^{e}} R_{h}^{n} m L_{h^{-1}g^{-1}}^{i} R_{gh}^{i} L_{h^{-1}}^{n} \\
       &= \sum_{n} 1 \otimes_{A^{e}} R_{h}^{n} m 1_{h^{-1}g^{-1}} L_{h^{-1}}^{n}
      \overset{\eqref{eq: epsilon-strongly graded algebras properties 2}}{=} \sum_{n} 1_{g^{-1}} \otimes_{A^{e}} R_{h}^{n} m L_{h^{-1}}^{n}.
   \end{align*}
   Analogously, one verifies that \(\pi_{g} \pi_{h} \pi_{h^{-1}} = \pi_{gh}\pi_{h^{-1}}\).
\end{proof}

\begin{remark}
    Note that
    the assignment \(e_{g} \in E(\mathcal{S}(G)) \mapsto 1_{g} \in S\) 
    extends uniquely to an algebra homomorphism $\mathcal{B}\to S$, endowing $S$ with a $\mathcal{B}$‑algebra structure such \(e_{g} \cdot s = 1_{g}s\) for all \(s \in S\) and \(g \in G\).
\end{remark}

\begin{lemma}
    \label{l: S bimodule structure of X otimesB S}
    Let \(X\) be a right \(K_\mathrm{par}G\)-module and \(M\) be an \(S^{e}\)-module.
    Then, \(X \otimes_{\mathcal{B}} M\) is a \(S\)-bimodule with the following structure:
    \[
        s_{g} \cdot (x \otimes_{\mathcal{B}} m) := x [g^{-1}] \otimes_{\mathcal{B}} s_{g} m \quad \text{and} \quad (x \otimes_{\mathcal{B}} m) \cdot s_{g} := x \otimes_{\mathcal{B}} m s_{g},
    \]
    where \(s_g \in S_g\), \(x \in X\), \(m \in M\), and \(g \in G\).
\end{lemma}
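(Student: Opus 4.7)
The plan is to verify three things in sequence: the two formulas descend to the tensor product $X \otimes_{\mathcal{B}} M$; each defines an associative unital $S$-action; and the two actions commute. The preparatory observation is that, by the preceding remark, $M$ carries a $\mathcal{B}$-bimodule structure in which $e_g$ acts on either side as multiplication by $1_g$, while $X$ is a right $\mathcal{B}$-module by restriction along $\mathcal{B}\hookrightarrow K_\mathrm{par}G$, so the tensor product makes sense as a $K$-module.

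Well-definedness of the right formula is immediate from the tensor balancing. For the left formula, since $\mathcal{B}$ is generated as a $K$-algebra by the idempotents $\{e_h\}_{h\in G}$, it suffices to verify balancing across a single $e_h$: namely, that $xe_h[g^{-1}]\otimes_{\mathcal{B}} s_g m = x[g^{-1}]\otimes_{\mathcal{B}} s_g(e_h m)$. Applying Lemma~\ref{p: computations rules}(i) in the rearranged form $e_h[g^{-1}]=[g^{-1}]e_{gh}$ and then balancing the resulting $e_{gh}$ through the tensor rewrites the left-hand side as $x[g^{-1}]\otimes_{\mathcal{B}} 1_{gh}s_g m$; the identity \eqref{eq: epsilon-strongly graded algebras properties 2} then transforms $1_{gh}s_g m$ into $s_g 1_h m = s_g(e_h m)$, matching the right-hand side.

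For the module axioms, the right action is manifestly associative and unital. On the left, the unit axiom is clear since $[1_G]=1$ acts trivially on $X$, so the step I expect to be the main obstacle is left associativity: proving $(s_g s_h)\cdot(x\otimes_{\mathcal{B}} m) = s_g\cdot(s_h\cdot(x\otimes_{\mathcal{B}} m))$ reduces to the equality $x[h^{-1}g^{-1}]\otimes_{\mathcal{B}} s_g s_h m = x[h^{-1}][g^{-1}]\otimes_{\mathcal{B}} s_g s_h m$. The decisive computation inside $K_\mathrm{par}G$ is $[h^{-1}][g^{-1}]=[h^{-1}g^{-1}]\,e_g$: the partial-representation axiom $[s^{-1}][s][t]=[s^{-1}][st]$ applied with $s=h$ and $t=h^{-1}g^{-1}$ gives $[h^{-1}][g^{-1}]=e_{h^{-1}}[h^{-1}g^{-1}]$, and Lemma~\ref{p: computations rules}(i) in the form $e_f[g']=[g']e_{g'^{-1}f}$ then pulls $e_{h^{-1}}$ across to produce $[h^{-1}g^{-1}]e_g$. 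Balancing the trailing $e_g$ through the tensor yields $x[h^{-1}g^{-1}]\otimes_{\mathcal{B}} 1_g s_g s_h m$, which equals the right-hand side by \eqref{eq: epsilon-strongly graded algebras properties 1}.

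Finally, commutativity of the two actions is a direct unwinding of the definitions: both $(s_g\cdot(x\otimes_{\mathcal{B}} m))\cdot t_h$ and $s_g\cdot((x\otimes_{\mathcal{B}} m)\cdot t_h)$ collapse to $x[g^{-1}]\otimes_{\mathcal{B}} s_g m t_h$ without invoking any further identities. Extending by $K$-linearity from homogeneous elements to all of $S$ then delivers the claimed $S$-bimodule structure on $X\otimes_{\mathcal{B}} M$.
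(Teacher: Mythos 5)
Your proof is correct and follows essentially the same route as the paper: the decisive step in both is the identity \([h^{-1}][g^{-1}]=[h^{-1}g^{-1}]e_{g}\) (the paper writes it as \([g^{-1}][h^{-1}]=[g^{-1}h^{-1}]e_h\)), followed by passing the idempotent across the tensor and absorbing \(1_g\) via \eqref{eq: epsilon-strongly graded algebras properties 1}. You additionally spell out the \(\mathcal{B}\)-balancing check and the remaining bimodule axioms, which the paper leaves implicit.
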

\begin{proof}
    It is clear that the right action is well-defined.
    Note that if the left action is well-defined, the bimodule structure will also be well-defined.
    Let \(x \in X\), \(m \in M\), \(g, h \in G\), and \(s_{g}, s_{h} \in S\).
    Then,
    \begin{align*}
        s_{h} \cdot (s_{g} \cdot (x \otimes_{\mathcal{B}} m)) 
        &= x [g^{-1}][h^{-1}] \otimes_{\mathcal{B}} s_{h}s_{g} m
        = x [g^{-1}h^{-1}] e_{h} \otimes_{\mathcal{B}} s_{h}s_{g} m \\
        &= x [g^{-1}h^{-1}] \otimes_{\mathcal{B}} 1_{h} s_{h}s_{g} m
        = x [g^{-1}h^{-1}] \otimes_{\mathcal{B}} s_{h}s_{g} m \\
        &= s_{h} s_{g} \cdot (x \otimes_{\mathcal{B}} m).
    \end{align*}
\end{proof}

\begin{proposition} \label{p: main functor isomorphism}
    The functors
    \[
        -\otimes_{K_\mathrm{par}G} (A \otimes_{A^e} - ): \rmod K_\mathrm{par}G \times \lmod{S^{e}}  \to \lmod{K}
    \]
    and
    \[
        (- \otimes_{\mathcal{B}} S) \otimes_{S^e} - : \rmod K_\mathrm{par}G \times \lmod{S^{e}} \to \lmod{K} 
    \] 
    are naturally isomorphic.
\end{proposition}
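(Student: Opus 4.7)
The plan is to construct mutually inverse natural maps between the two functors. I set
\[
\Phi_{X,M}: (X \otimes_{\mathcal{B}} S) \otimes_{S^e} M \to X \otimes_{K_\mathrm{par}G} (A \otimes_{A^e} M), \quad (x \otimes_\mathcal{B} s) \otimes_{S^e} m \mapsto x \otimes_{K_\mathrm{par}G} (1 \otimes_{A^e} s m),
\]
and
\[
\Psi_{X,M}: X \otimes_{K_\mathrm{par}G} (A \otimes_{A^e} M) \to (X \otimes_{\mathcal{B}} S) \otimes_{S^e} M, \quad x \otimes_{K_\mathrm{par}G} (a \otimes_{A^e} m) \mapsto (x \otimes_\mathcal{B} a) \otimes_{S^e} m.
\]
Both formulas come from $K$-trilinear maps on the respective triple products, and naturality in $X$ and $M$ is manifest once the maps descend to the tensor quotients. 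The substantive task is to check descent and that the two maps are mutually inverse.

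For $\Phi$, the $\mathcal{B}$-balance reduces to $e_h \cdot (1 \otimes_{A^e} s m) = 1_h \otimes_{A^e} s m$, which is \eqref{eq: idempotent action on A otimes M} applied through $e_h = [h][h^{-1}]$. The right $S$-balance is immediate. The delicate check is the left $S$-balance coming from Lemma~\ref{l: S bimodule structure of X otimesB S}: for $s' \in S_{g'}$, unfolding the left action and absorbing $[g'^{-1}]$ into the partial representation of Proposition~\ref{p: partial representation on coinvariants} reduces the required equality to
\[
\sum_i 1 \otimes_{A^e} R_{g'^{-1}}^i s' s m L_{g'}^i = 1 \otimes_{A^e} s m s' \quad \text{in } A \otimes_{A^e} M.
\]
Since $R_{g'^{-1}}^i s' \in S_{g'^{-1}} S_{g'} \subseteq A$, cycling $R_{g'^{-1}}^i s'$ across the $A^e$-tensor yields $\sum_i 1 \otimes_{A^e} s m L_{g'}^i R_{g'^{-1}}^i s' = 1 \otimes_{A^e} s m \cdot 1_{g'} s' = 1 \otimes_{A^e} s m s'$ by \eqref{eq: 1g sum of LR} and \eqref{eq: epsilon-strongly graded algebras properties 1}.

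For $\Psi$, the $A^e$-balance follows by moving elements of $A$ across $\otimes_{S^e}$ via the degree-one case of the bimodule structure of Lemma~\ref{l: S bimodule structure of X otimesB S}. Because $K_\mathrm{par}G$ is generated by $\{[g]\}_{g \in G}$, the $K_\mathrm{par}G$-balance reduces to verifying
\[
(x[g] \otimes_\mathcal{B} a) \otimes_{S^e} m = \sum_i (x \otimes_\mathcal{B} 1) \otimes_{S^e} R_g^i a m L_{g^{-1}}^i
\]
for each $g \in G$. Starting from the right, I would push $R_g^i$ across via the right $S$-action and then $L_{g^{-1}}^i$ across via the left $S$-action of Lemma~\ref{l: S bimodule structure of X otimesB S}, obtaining $(x[g] \otimes_\mathcal{B} \sum_i L_{g^{-1}}^i R_g^i) \otimes_{S^e} a m = (x[g] \otimes_\mathcal{B} 1_{g^{-1}}) \otimes_{S^e} a m$. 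The identity $[g]\,e_{g^{-1}} = [g]$ from Lemma~\ref{p: computations rules}(i) collapses this to $(x[g] \otimes_\mathcal{B} 1) \otimes_{S^e} a m$, and a final transfer of $a$ across $\otimes_{S^e}$ matches the left-hand side.

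Once descent is established, $\Phi \circ \Psi = \mathrm{id}$ follows from $a \otimes_{A^e} m = 1 \otimes_{A^e} a m$ in $A \otimes_{A^e} M$, and $\Psi \circ \Phi = \mathrm{id}$ follows from $(x \otimes_\mathcal{B} 1) \cdot s = x \otimes_\mathcal{B} s$. The hardest parts will be the two balance verifications, which orchestrate the decomposition $1_g = \sum_i L_g^i R_{g^{-1}}^i$, the explicit partial representation formula \eqref{eq: partial representation on H0}, and the commutation rules of Lemma~\ref{p: computations rules}.
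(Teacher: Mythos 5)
Your proposal is correct and takes essentially the same approach as the paper: the paper's proof exhibits the map $\gamma\bigl(x \otimes_{K_\mathrm{par}G}(a\otimes_{A^e}m)\bigr) = (x \otimes_{\mathcal{B}} 1_S) \otimes_{S^e} am$ (your $\Psi$) together with its inverse (your $\Phi$), and checks the same balance conditions — the $[g]$-balance via the partial representation formula and the $S^e$- and $\mathcal{B}$-balance via the cyclic identities in $A\otimes_{A^e}M$ and the decomposition $1_g=\sum_i L_g^iR_{g^{-1}}^i$. Your individual verifications are accurate, so no changes are needed.
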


\begin{proof}
    Let $X$ be a right $K_\mathrm{par}G$-module, and \(M\) be an \(S^{e}\)-module.
    Define
    \begin{align*}
        \gamma : X \otimes_{K_\mathrm{par}G} (A\otimes_{A^e}M) & \to (X \otimes_{\mathcal{B}} S)  \otimes_{S^e}M \\ 
        x \otimes_{K_\mathrm{par}G}(a\otimes_{A^e}m) &\mapsto (x \otimes_{\mathcal{B}} 1_S) \otimes_{S^e} a m.
    \end{align*}
    Note that \(\gamma\) is well-defined since
    \begin{align*}
        \gamma(x[g] \otimes_{K_\mathrm{par}G} (1 \otimes_{A^{e}} am))
        &=(x [g] \otimes_{\mathcal{B}} 1_S) \otimes_{S^e} a m 
        = (x [g]e_{g^{-1}} \otimes_{\mathcal{B}} a) \otimes_{S^e} m \\
        &= (x [g] \otimes_{\mathcal{B}} a1_{g^{-1}}) \otimes_{S^e} m 
        = \sum_{i} (x [g] \otimes_{\mathcal{B}} aL_{g^{-1}}^{i} R_{g}^{i}) \otimes_{S^e} m \\
        &= \sum_{i} aL_{g^{-1}}^{i} \cdot (x \otimes_{\mathcal{B}} R_{g}^{i}) \otimes_{S^e} m
        = \sum_{i} (x \otimes_{\mathcal{B}} 1_{S}) \otimes_{S^e} R_{g}^{i} m a L_{g^{-1}}^{i} \\
        &= \sum_{i}\gamma(x \otimes_{K_\mathrm{par}G} (1 \otimes_{A^{e}} R_{g}^{i}maL_{g^{-1}}^{i}))
        = \gamma(x \otimes_{K_\mathrm{par}G} [g](1 \otimes_{A^{e}} ma)).
    \end{align*}
    Furthermore, the inverse of \(\gamma\) is given by
    \[
        \gamma^{-1}\big((x \otimes_{\mathcal{B}} s) \otimes_{S^e} m \big) := x \otimes_{K_\mathrm{par}G} (1 \otimes_{A^{e}} s m).
    \]
    Observe that for all \(g,h \in G\), \(s_{g} \in S_{g}\) and \(s_{h} \in S_{h}\) we have that
    \begin{align*}
        \gamma^{-1}(s_{g}(x \otimes_{\mathcal{B}} s) s_{h} \otimes_{S^e} m)
        &= \gamma^{-1}((x[g^{-1}] \otimes_{\mathcal{B}} s_{g} s s_{h}) \otimes_{S^e} m) \\
        &= x[g^{-1}] \otimes_{K_\mathrm{par}G} (1 \otimes_{A^{e}} s_{g} s s_{h} m) \\
        &= \sum_{i} x \otimes_{K_\mathrm{par}G} (1 \otimes_{A^{e}} L_{g^{-1}}^{i}s_{g} s s_{h} m R_{g}^{i}) \\
        &= \sum_{i} x \otimes_{K_\mathrm{par}G} (1 \otimes_{A^{e}} s s_{h} m R_{g}^{i}L_{g^{-1}}^{i}s_{g} ) \\
        &= x \otimes_{K_\mathrm{par}G} (1 \otimes_{A^{e}} s s_{h} m 1_{g}s_{g} ) \\
        &= \gamma^{-1}\big((x \otimes_{\mathcal{B}} s) \otimes_{S^e} s_{h} m s_{g}\big).
    \end{align*}
    and
    \begin{align*}
        \gamma^{-1}((x \cdot e_{h} \otimes_{\mathcal{B}} s) \otimes_{S^e} m)
        &= x \cdot e_{h} \otimes_{K_\mathrm{par}G} (1 \otimes_{A^{e}} s m)
        \overset{\eqref{eq: idempotent action on A otimes M}}{=} x \otimes_{K_\mathrm{par}G} (1 \otimes_{A^{e}} (e_{h} \cdot s) m) \\
        &= \gamma^{-1}((x \otimes_{\mathcal{B}} (e_{h} \cdot s)) \otimes_{S^e} m).
    \end{align*}
    Hence, \(\gamma^{-1}\) is well-defined, and consequently, \(\gamma\) is an isomorphism.
\end{proof}

\begin{corollary} \label{c: H0 F isomorphism}
    There is an isomorphism of functors:
    \[
        H_{0}^\mathrm{par}(G, -) \circ \mathcal{F} \cong H_{0}(S, -).
    \]
\end{corollary}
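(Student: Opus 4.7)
The plan is to read off the corollary by specializing Proposition~\ref{p: main functor isomorphism} to $X=\mathcal{B}$ and identifying $\mathcal{B}\otimes_\mathcal{B} S$ with $S$. Spelled out, what must be produced is an isomorphism, natural in $M\in\lmod{S^{e}}$,
\[
    \mathcal{B}\otimes_{K_{\mathrm{par}}G}\bigl(A\otimes_{A^{e}}M\bigr)\;\cong\;S\otimes_{S^{e}}M,
\]
whose left-hand side is $H_{0}^{\mathrm{par}}(G,\mathcal{F}(M))$ by Definition~\ref{d: partial group cohomology} and whose right-hand side is $H_{0}(S,M)$.

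First, I would substitute $X=\mathcal{B}$, taken with the right $K_{\mathrm{par}}G$-action $u\triangleleft[g]=[g^{-1}]u[g]$ recalled in the preliminaries, directly into Proposition~\ref{p: main functor isomorphism}. This immediately furnishes a natural isomorphism
\[
    \mathcal{B}\otimes_{K_{\mathrm{par}}G}\bigl(A\otimes_{A^{e}}M\bigr)\;\xrightarrow{\ \sim\ }\;\bigl(\mathcal{B}\otimes_{\mathcal{B}}S\bigr)\otimes_{S^{e}}M.
\]

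Next I would construct an $S$-bimodule isomorphism $\mu:\mathcal{B}\otimes_{\mathcal{B}}S\xrightarrow{\sim}S$ sending $u\otimes_{\mathcal{B}}s\mapsto u\cdot s$, where the action of $\mathcal{B}$ on $S$ is the $\mathcal{B}$-algebra structure from the remark after Proposition~\ref{p: partial representation on coinvariants} (so $e_{g}\cdot s=1_{g}s$). For $\mu$ to descend to the tensor over $\mathcal{B}$, one checks that the right $\mathcal{B}$-action on $\mathcal{B}$ obtained by restriction of $\triangleleft$ is $u\triangleleft w=wu=uw$ for $u,w\in\mathcal{B}$ (using commutativity of $\mathcal{B}$), which agrees with ordinary multiplication and is therefore balanced against the left $\mathcal{B}$-action on $S$. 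The inverse is $s\mapsto 1\otimes_{\mathcal{B}}s$, so $\mu$ is a bijection.

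The substantive verification is that $\mu$ intertwines the $S$-bimodule structure of Lemma~\ref{l: S bimodule structure of X otimesB S} on the source with the regular $S$-bimodule structure on the target. The right action is transparent. For the left action one computes
\[
    \mu\bigl(s_{g}\cdot(u\otimes_{\mathcal{B}}s)\bigr)=\mu\bigl((u\triangleleft[g^{-1}])\otimes_{\mathcal{B}}s_{g}s\bigr)=\bigl([g]u[g^{-1}]\bigr)\cdot(s_{g}s),
\]
and then uses Lemma~\ref{p: computations rules}(i)--(ii) to rewrite $[g]u[g^{-1}]\in\mathcal{B}$ as a product of basic idempotents $e_{gh}$, followed by \eqref{eq: epsilon-strongly graded algebras properties 2} in the form $s_{g}1_{h}=1_{gh}s_{g}$, to move $s_{g}$ past $u$ in $S$; both sides then collapse to $s_{g}(u\cdot s)=s_{g}\,\mu(u\otimes_{\mathcal{B}}s)$.

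Applying $-\otimes_{S^{e}}M$ to $\mu$ and composing with the isomorphism produced in the first step yields the claimed natural isomorphism, and therefore the isomorphism of functors $H_{0}^{\mathrm{par}}(G,-)\circ\mathcal{F}\cong H_{0}(S,-)$. The only real obstacle is the bimodule compatibility in the second step, but it reduces entirely to the commutation rules already recorded in Lemma~\ref{p: computations rules} and~\eqref{eq: epsilon-strongly graded algebras properties 2}, so the proof is essentially a bookkeeping exercise once Proposition~\ref{p: main functor isomorphism} is in hand.
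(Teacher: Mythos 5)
Your proposal is correct and takes essentially the same route as the paper: specialize Proposition~\ref{p: main functor isomorphism} to $X=\mathcal{B}$ and identify $\mathcal{B}\otimes_{\mathcal{B}}S$ with $S$ as an $S$-bimodule. The paper merely asserts that last identification, while you verify the compatibility of the Lemma~\ref{l: S bimodule structure of X otimesB S} structure with the regular bimodule structure explicitly; that verification is sound.
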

\begin{proof}
    By Proposition~\ref{p: main functor isomorphism} we have an natural isomorphis of functors
    \[
        \mathcal{B}\otimes_{K_\mathrm{par}G} (A \otimes_{A^e} - ) \cong (\mathcal{B} \otimes_{\mathcal{B}} S) \otimes_{S^e} -.
    \]
    Moreover, note that \(\mathcal{B} \otimes_{\mathcal{B}} S\) is isomorphic to \(S\) as \(S\)-bimodule.
    Whence we obtain the desired isomorphism.
\end{proof}

\begin{lemma}\label{l: acylic heart}
    Define the functor \(\mathfrak{T}: \rmod{K_\mathrm{par}G} \to \lmod{S^{e}}\)
    such that \(\mathfrak{T}(X) := X \otimes_{\mathcal{B}} S\).
    Then, \(\mathbf{L}_{\bullet}(\mathfrak{T}) \cong \operatorname{Tor}_{\bullet}^{\mathcal{B}}(-, S)\).
    In particular, \(\mathbf{L}_{n}(\mathfrak{T})(\mathcal{B}) = 0\) for all \(n \geq 1\).
\end{lemma}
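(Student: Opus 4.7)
The plan is to exhibit $\mathfrak{T}$ as the composition of the restriction of scalars $\rmod{K_\mathrm{par}G}\to\rmod{\mathcal{B}}$ along the inclusion $\mathcal{B}\hookrightarrow K_\mathrm{par}G$ with the functor $-\otimes_{\mathcal{B}}S$; the $S^{e}$-module structure on $X\otimes_{\mathcal{B}}S$ supplied by Lemma~\ref{l: S bimodule structure of X otimesB S} is transported compatibly to the derived functors.  Once this factorisation is set up, the lemma reduces to one key claim: \emph{$K_\mathrm{par}G$ is flat (in fact projective) as a right $\mathcal{B}$-module}.  Granting it, every projective right $K_\mathrm{par}G$-module is a direct summand of a direct sum of copies of $K_\mathrm{par}G$, hence flat over $\mathcal{B}$; consequently any projective resolution $P_{\bullet}\to X$ in $\rmod{K_\mathrm{par}G}$ restricts to a flat $\mathcal{B}$-resolution of $X$, and the complex $P_{\bullet}\otimes_{\mathcal{B}}S$ simultaneously computes $\mathbf{L}_{\bullet}\mathfrak{T}(X)$ (by definition) and $\operatorname{Tor}_{\bullet}^{\mathcal{B}}(X,S)$, with naturality in $X$ being automatic.

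To prove the key claim I would invoke the standard identification of $K_\mathrm{par}G$ with the partial skew group algebra $\mathcal{B}\star_{\alpha}G=\bigoplus_{g\in G}\mathcal{B}e_{g}\,\delta_{g}$, in which the canonical partial action $\alpha_{g}\colon\mathcal{B}e_{g^{-1}}\to\mathcal{B}e_{g}$ is induced by conjugation $b\mapsto[g]b[g^{-1}]$, and $[g]$ corresponds to $e_{g}\delta_{g}$.  Unpacking the product $(a\delta_g)(u\delta_1)=\alpha_g(\alpha_{g^{-1}}(a)u)\delta_g$, one checks that the map $\mathcal{B}e_{g^{-1}}\to\mathcal{B}e_{g}\,\delta_{g}$, $c\mapsto\alpha_{g}(c)\delta_{g}$, is an isomorphism of right $\mathcal{B}$-modules, whence $K_\mathrm{par}G\cong\bigoplus_{g\in G}\mathcal{B}e_{g^{-1}}$ as right $\mathcal{B}$-modules.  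Each $\mathcal{B}e_{g^{-1}}$ is a direct summand of $\mathcal{B}$ cut out by an idempotent, hence projective, and so is the direct sum.  The "in particular" statement then follows immediately: $\mathcal{B}$ is free of rank one over itself, so $\operatorname{Tor}_{n}^{\mathcal{B}}(\mathcal{B},S)=0$ for every $n\geq 1$.

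The main obstacle I anticipate is the bookkeeping needed to identify the right $\mathcal{B}$-module structure on each summand $\mathcal{B}e_{g}\,\delta_{g}$: the twist by $\alpha_{g^{-1}}$ is precisely where the left/right asymmetry of the partial skew group algebra enters, and one must resist conflating it with the tautological left $\mathcal{B}$-structure, under which the decomposition $K_\mathrm{par}G\cong\bigoplus_{g}\mathcal{B}e_{g}$ is visible without any twisting.  Everything else is routine homological algebra; if one wished to avoid even mentioning partial skew group algebras, the same projectivity statement can be proved directly by exhibiting $\{[g]\mathcal{B}\}_{g\in G}$ as a $\mathcal{B}$-module decomposition of $K_\mathrm{par}G$ and identifying each summand with $\mathcal{B}e_{g^{-1}}$ through right multiplication by $[g^{-1}]$.
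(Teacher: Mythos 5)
Your argument is correct and follows the same underlying strategy as the paper: the paper disposes of the lemma in one line by citing a result of Dokuchaev--Jerez asserting that projective resolutions in $\rmod{K_\mathrm{par}G}$ restrict to (projective, hence flat) resolutions in $\rmod{\mathcal{B}}$, and that citation rests on exactly the fact you isolate and prove from scratch, namely that $K_\mathrm{par}G\cong\bigoplus_{g\in G}\mathcal{B}e_g\,\delta_g$ is projective as a right $\mathcal{B}$-module. Your self-contained verification via the untwisting isomorphism $\mathcal{B}e_{g^{-1}}\to\mathcal{B}e_g\,\delta_g$, $c\mapsto\alpha_g(c)\delta_g$, is sound, and the rest (flat resolutions compute $\operatorname{Tor}^{\mathcal{B}}$, and $\operatorname{Tor}^{\mathcal{B}}_n(\mathcal{B},S)=0$ for $n\geq 1$) is routine, so the proof goes through. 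One small slip in your closing aside: right multiplication by $[g^{-1}]$ is not a homomorphism of right $\mathcal{B}$-modules, and it sends $[g]b$ to $[g]b[g^{-1}]\in\mathcal{B}e_g$ rather than into $\mathcal{B}e_{g^{-1}}$; the elementary identification you want there is \emph{left} multiplication by $[g^{-1}]$, i.e. $[g]b\mapsto e_{g^{-1}}b$, which is a right $\mathcal{B}$-isomorphism $[g]\mathcal{B}\to\mathcal{B}e_{g^{-1}}$ with inverse $c\mapsto[g]c$. Since your main derivation of the key claim does not use that aside, this does not affect the validity of the proof.
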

\begin{proof}
    By \cite[Lemma 3.22]{Article_Dokuchaev-Jerez_2023_OPSP},
    in the particular case $\mathcal{H}_{\mathrm{par}} = K_{\mathrm{par}}G$,
    it follows that any projective resolution in $\mathbf{Mod}\text{-}K_{\mathrm{par}}G$ is also a projective resolution in $\mathbf{Mod}\text{-}\mathcal{B}$.
    Hence the desired result follows.
\end{proof}

\begin{proposition} \label{p: F send projectives to H0 acyclics}
    \(\mathcal{F}\) sends projective $S$-bimodules to left $H_{0}^{\mathrm{par}}(G, -)$-acyclic modules.
\end{proposition}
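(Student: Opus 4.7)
The plan is to show that $\operatorname{Tor}_{n}^{K_{\mathrm{par}}G}(\mathcal{B},\mathcal{F}(P))=0$ for all $n\ge 1$ and every projective $S^{e}$-module $P$, since by Definition~\ref{d: partial group cohomology} the functor $H_{0}^{\mathrm{par}}(G,-)$ coincides with $\mathcal{B}\otimes_{K_{\mathrm{par}}G}-$ and acyclicity is the vanishing of its left-derived functors.

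First I would pick a projective resolution $Y_\bullet\to\mathcal{B}$ in $\rmod{K_{\mathrm{par}}G}$, so that $\operatorname{Tor}_{n}^{K_{\mathrm{par}}G}(\mathcal{B},\mathcal{F}(P))$ is the $n$-th homology of $Y_\bullet \otimes_{K_{\mathrm{par}}G} \mathcal{F}(P)$. Since the isomorphism of Proposition~\ref{p: main functor isomorphism} is natural in both arguments, applying it degreewise produces an isomorphism of complexes
\[
   Y_\bullet \otimes_{K_{\mathrm{par}}G} \mathcal{F}(P)\;\cong\;\bigl(Y_\bullet \otimes_{\mathcal{B}} S\bigr)\otimes_{S^{e}} P.
\]
The problem is thus converted into a computation of the homology of the right-hand complex.

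Next, I would analyze $Y_\bullet \otimes_{\mathcal{B}} S$. By Lemma~\ref{l: acylic heart}, each projective right $K_{\mathrm{par}}G$-module is projective---hence flat---over $\mathcal{B}$, so $Y_\bullet$ is also a flat $\mathcal{B}$-resolution of $\mathcal{B}$. Therefore the homology of $Y_\bullet \otimes_{\mathcal{B}} S$ equals $\operatorname{Tor}^{\mathcal{B}}_\bullet(\mathcal{B},S)$, which is concentrated in degree $0$ with value $S$ (as $\mathcal{B}$ is free over itself). In other words, $Y_\bullet \otimes_{\mathcal{B}} S \to S \to 0$ is an exact complex of $S$-bimodules.

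Finally, projectivity of $P$ over $S^{e}$ makes $-\otimes_{S^{e}} P$ exact, so this exactness is preserved after tensoring. Hence $\bigl(Y_\bullet \otimes_{\mathcal{B}} S\bigr)\otimes_{S^{e}} P$ is acyclic in positive degrees, which gives the required vanishing. I do not expect a serious obstacle: the only point deserving care is ensuring that the natural isomorphism of Proposition~\ref{p: main functor isomorphism} is sufficiently functorial to pass to complexes, but this follows immediately from its bifunctoriality.
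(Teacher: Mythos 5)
Your argument is correct and is essentially the paper's own proof: both reduce the claim to the vanishing of $\operatorname{Tor}_{n}^{K_{\mathrm{par}}G}(\mathcal{B},\mathcal{F}(P))$, transport the computation across the natural isomorphism of Proposition~\ref{p: main functor isomorphism} applied to a projective resolution $Q_\bullet\to\mathcal{B}$, invoke Lemma~\ref{l: acylic heart} to see that $Q_\bullet\otimes_{\mathcal{B}}S$ is exact in positive degrees, and finish using exactness of $-\otimes_{S^{e}}P$ for $P$ projective. The only cosmetic difference is that you unwind Lemma~\ref{l: acylic heart} into the underlying flatness statement rather than citing its conclusion directly.
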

\begin{proof}
    Let \(P\) be any projective $S^e$-module.
    Observe that
    \begin{align*}
        H^{\mathrm{par}}_{n}(G, A \otimes_{A^e}P)
        &\cong \mathbf{L}_n(- \otimes_{K_\mathrm{par}G} (A \otimes_{A^e}P))(\mathcal{B}) \\
        (\text{by Proposition \ref{p: main functor isomorphism}}) &\cong \mathbf{L}_n((- \otimes_{\mathcal{B}} S) \otimes_{S^e}P)(\mathcal{B}).
    \end{align*}
    Let $Q_{\bullet} \to \mathcal{B}$ a projective resolution of $\mathcal{B}$ in \textbf{Mod}-\(K_\mathrm{par}G\). Then,
    \[
        H_{n}^{\mathrm{par}}(G, A \otimes_{A^e}P)
        \cong
        H_n \left(  (Q_{\bullet}\otimes_{\mathcal{B}} S) \otimes_{S^e}P \right).
    \]
    By Lemma~\ref{l: acylic heart} the complex $Q_{\bullet}\otimes_{\mathcal{B}} S$ is exact for all $n \geq 1$.
    Then, the complex $(Q_{\bullet}\otimes_{\mathcal{B}} S) \otimes_{S^e}P$ is exact for all $n \geq 1$ since $P$ is projective as $S^e$-module, and so
    \[
        H^{\mathrm{par}}_{n}(G, A \otimes_{A^{e}}P) \cong  H_n \left(  (Q_{\bullet}\otimes_{\mathcal{B}} S) \otimes_{S^e}P \right) = 0, \forall n \geq 1,
    \]
    as we wanted to show.
\end{proof}

\begin{theorem} \label{t: homology spectral sequence of an epsilon ring}
    Let \(S\) be an epsilon-strongly \(G\)-graded algebra, let \(A := S_1\), and let \(M\) be an \(S^{e}\)-module. Then, there exists a homology spectral sequence that converges to the Hochschild homology of \(S\) with coefficients in \(M\): 
    \[
        E^{2}_{p,q} = H_{p}^\mathrm{par}(G, H_{q}(A, M)) \Rightarrow H_{p+q}(S, M).
    \]
\end{theorem}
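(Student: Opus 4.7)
The plan is to realize this as a Grothendieck spectral sequence for the composition of two right exact functors. Thanks to Proposition~\ref{p: partial representation on coinvariants}, the functor $\mathcal{F} = A\otimes_{A^{e}}-$ can be upgraded to a functor
\[
  \mathcal{F}:\lmod{S^{e}}\longrightarrow \lmod{K_{\mathrm{par}}G},
\]
by endowing $A\otimes_{A^{e}}M$ with the partial representation defined in \eqref{eq: partial representation on H0}. I would first verify that this refined $\mathcal{F}$ is genuinely functorial: an $S^{e}$-linear map $M\to N$ induces a $K$-linear map $A\otimes_{A^{e}}M\to A\otimes_{A^{e}}N$, and it is routine (from the formula for $\pi_{g}$) to check that this map intertwines the partial $G$-actions, so it is $K_{\mathrm{par}}G$-linear. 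Composing with $H_{0}^{\mathrm{par}}(G,-)=\mathcal{B}\otimes_{K_{\mathrm{par}}G}-$ then gives a right exact functor $\lmod{S^{e}}\to\lmod{K}$, and Corollary~\ref{c: H0 F isomorphism} identifies this composition with $H_{0}(S,-)$.

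The next step is to invoke the Grothendieck spectral sequence for the composition of right exact functors: if $\mathcal{F}$ sends projectives to $H_{0}^{\mathrm{par}}(G,-)$-acyclic objects, then for every $M\in\lmod{S^{e}}$ there is a convergent first-quadrant spectral sequence
\[
  E^{2}_{p,q}=\mathbf{L}_{p}\bigl(H_{0}^{\mathrm{par}}(G,-)\bigr)\bigl(\mathbf{L}_{q}\mathcal{F}(M)\bigr)\Longrightarrow \mathbf{L}_{p+q}\bigl(H_{0}^{\mathrm{par}}(G,-)\circ\mathcal{F}\bigr)(M).
\]
The acyclicity hypothesis is precisely Proposition~\ref{p: F send projectives to H0 acyclics}, so the machine applies. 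The abutment is $\mathbf{L}_{p+q}H_{0}(S,-)(M)=H_{p+q}(S,M)$ by Corollary~\ref{c: H0 F isomorphism}, and $\mathbf{L}_{p}H_{0}^{\mathrm{par}}(G,-)=H_{p}^{\mathrm{par}}(G,-)$ by Definition~\ref{d: partial group cohomology}. What remains is to identify $\mathbf{L}_{q}\mathcal{F}(M)$ with $H_{q}(A,M)$ as a $K_{\mathrm{par}}G$-module; Proposition~\ref{p: F HA isomorphism}(iii) gives this as $K$-modules, and the $K_{\mathrm{par}}G$-structure is transported along the same isomorphism because it is computed from $A\otimes_{A^{e}}P_{\bullet}$ for any $S^{e}$-projective resolution $P_{\bullet}\to M$, which is simultaneously an $A^{e}$-projective resolution by Proposition~\ref{p: F HA isomorphism}(ii).

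The main obstacle I anticipate is the careful handling of the partial $G$-action on $H_{q}(A,M)$. One must check that the chain-level action of Proposition~\ref{p: partial representation on coinvariants} commutes with the differentials of $A\otimes_{A^{e}}P_{\bullet}$, so that it descends to homology, and that the resulting action on $H_{q}(A,M)$ is independent of the chosen resolution and of the choice of the family $\{(L^{i}_{g},R^{i}_{g^{-1}})\}$. The well-definedness argument preceding Proposition~\ref{p: partial representation on coinvariants} already handles the second issue on $q=0$; the same tensor identities propagate up the resolution since the partial action is defined by tensoring on the left with fixed elements of $S$, which is an $A^{e}$-linear (hence chain) operation. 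Once this bookkeeping is in place, the spectral sequence statement follows immediately from Grothendieck's theorem.
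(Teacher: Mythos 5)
Your proposal is correct and follows essentially the same route as the paper: both apply the Grothendieck spectral sequence to the composition $H_{0}^{\mathrm{par}}(G,-)\circ\mathcal{F}$, using Corollary~\ref{c: H0 F isomorphism} to identify the composite with $H_{0}(S,-)$, Proposition~\ref{p: F send projectives to H0 acyclics} for the acyclicity hypothesis, and Proposition~\ref{p: F HA isomorphism}(iii) to identify $\mathbf{L}_{\bullet}\mathcal{F}(M)$ with $H_{\bullet}(A,M)$. Your additional remarks on the functoriality of the $K_{\mathrm{par}}G$-structure and its descent to higher homology are sound bookkeeping that the paper leaves implicit.
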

\begin{proof}
    We aim to apply the Grothendieck spectral sequence Theorem (see \cite[Theorem 10.48]{Book_Rotman_2008_AITHA}).
    By Corollary~\ref{c: H0 F isomorphism} and Proposition~\ref{p: F send projectives to H0 acyclics} we know that \(H_{0}^\mathrm{par}(G, -) \circ \mathcal{F} \cong H_{0}(S, -)\)
    and that \(\mathcal{F}\) sends projective \(S^{e}\)-modules to left \(H_{0}^{\mathrm{par}}(G, -)\)-acyclic modules, respectively.
    Moreover, by (iii) of Proposition~\ref{p: F HA isomorphism} we know that \(\operatorname{L}_{\bullet}(\mathcal{F})(M) \cong H_{\bullet}(A, M)\) for every \(S^{e}\)-module \(M\).
\end{proof}

\subsection{Cohomology}
We now construct a spectral sequence converging to the Hochschild cohomology of an epsilon-strongly \(G\)-graded algebra \(S\) with coefficients in an \(S\)-bimodule \(M\).
For each \(g\), define \(\tau_{g}: \hom_{A^{e}}(A,M) \to \hom_{A^{e}}(A,M)\) by 
\[
    \tau_{g}(f)(a) := \sum_{i} aL_{g}^{i}f(1)R_{g^{-1}}^{i}.
\]
Dual to Proposition~\ref{p: partial representation on coinvariants}, one shows that \(\tau\) is a partial representation.
\begin{remark}
    Note that if $M = S$, then $\hom_{A^{e}}(A,S) \cong Z(A) \oplus \bigoplus_{g \neq 1} \hom_{A^{e}}(A, S_{g})$.
    Thus, the restriction of \(\tau\) to \(Z(A)\) recovers the partial representation of \(G\) on $Z(A)$ defined in
    \cite[Section 3]{Article_Nystedt-Oinert-Pinedo_2018_EGRSAS}.
\end{remark}

Consider the functor 
\[
    \mathcal{G} := \hom_{A^{e}}(A, -) : \lmod{S^{e}} \to \lmod{K}.
\]

By an argument analogous to that of Proposition~\ref{p: main functor isomorphism},
it follows that for any left \(K_\mathrm{par}G\)-module \(X\) and any \(S^{e}\)-module \(M\),
the map
\[
   \zeta: \hom_{K_\mathrm{par}G}(X, \hom_{A^{e}}(A, M)) \to  \hom_{S^{e}}(X \otimes_{\mathcal{B}} S, M)
\]
such that
\[
    \zeta(f)(x \otimes_{\mathcal{B}} s) := f_x\big(1_A\big) \cdot s,
\]
where \(f_x:= f(x) \in \hom_{A^{e}}(A,M)\),
is a well-defined isomorphism with inverse
\[
    \big(\zeta^{-1}(f')\big)_x(a) := f'(x \otimes_{\mathcal{B}} a).
\]
Furthermore, \(\zeta\) is natural in both variables \(X\) and \(M\).
Hence, we obtain the following isomorphism of functors:
\begin{equation}\label{eq: strange cohomology adjuntion 1}
    \hom_{K_\mathrm{par}G}(-, \hom_{A^{e}}(A, M)) \cong  \hom_{S^{e}}(- \otimes_{\mathcal{B}} S, M)
\end{equation}
and
\begin{equation}\label{eq: strange cohomology adjuntion 2}
    \hom_{K_\mathrm{par}G}(X, \hom_{A^{e}}(A, -)) \cong  \hom_{S^{e}}(X \otimes_{\mathcal{B}} S, -),
\end{equation}
for all \(K_\mathrm{par}G\)-modules \(X\) and all \(S^{e}\)-modules \(M\).
Consequently, if we put \(X = \mathcal{B}\) in \eqref{eq: strange cohomology adjuntion 2}, we obtain the isomorphism
\begin{equation}\label{eq: cohomology main functor isomorphism}
    H^{0}_{\mathrm{par}}(G, -) \circ \mathcal{G} \cong H^{0}(S, -).
\end{equation}

\begin{proposition}\label{p: mG send injectives to acyclics}
    The functor \(\mathcal{G}\) sends injective \(S^{e}\)-modules to left \(H^{0}_{\mathrm{par}}(G, -)\)-acyclic modules.
\end{proposition}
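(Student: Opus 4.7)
The plan is to mimic, step by step, the argument of Proposition~\ref{p: F send projectives to H0 acyclics} but in the dual (Hom) setting, now exploiting the adjunction \eqref{eq: strange cohomology adjuntion 1} instead of the natural isomorphism of Proposition~\ref{p: main functor isomorphism}, and replacing the role played there by projectivity of $P$ with the role played here by injectivity of $I$.

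Concretely, let $I$ be an injective $S^e$-module. The goal is to show $H^{n}_{\mathrm{par}}(G,\mathcal{G}(I)) = 0$ for every $n\geq 1$. First I would fix a projective resolution $P_{\bullet}\to \mathcal{B}\to 0$ in $\rmod{K_{\mathrm{par}}G}$, so that by definition
\[
    H^{n}_{\mathrm{par}}(G,\mathcal{G}(I)) \;=\; H^{n}\bigl(\hom_{K_{\mathrm{par}}G}(P_{\bullet},\hom_{A^{e}}(A,I))\bigr).
\]
Next, applying the natural isomorphism \eqref{eq: strange cohomology adjuntion 1} termwise with $M=I$ converts this into the cohomology of the complex $\hom_{S^{e}}(P_{\bullet}\otimes_{\mathcal{B}} S,\,I)$, where each term is an $S^{e}$-module by Lemma~\ref{l: S bimodule structure of X otimesB S}.

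The key step is then to show that the augmented complex $P_{\bullet}\otimes_{\mathcal{B}} S \to S \to 0$ is exact. This is exactly the content of Lemma~\ref{l: acylic heart}: since any projective right $K_{\mathrm{par}}G$-module is projective as a right $\mathcal{B}$-module, the complex $P_{\bullet}\otimes_{\mathcal{B}} S$ computes $\operatorname{Tor}^{\mathcal{B}}_{\bullet}(\mathcal{B},S) = \mathbf{L}_{\bullet}\mathfrak{T}(\mathcal{B})$, which vanishes in positive degrees, while $H_{0}(P_{\bullet}\otimes_{\mathcal{B}} S)\cong \mathcal{B}\otimes_{\mathcal{B}} S \cong S$. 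Applying the exact functor $\hom_{S^{e}}(-,I)$ to this exact sequence of $S^{e}$-modules (exactness uses injectivity of $I$) produces a cochain complex whose cohomology vanishes in all positive degrees. Therefore $H^{n}_{\mathrm{par}}(G,\mathcal{G}(I))=0$ for $n\geq 1$, as required.

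I do not expect a genuine obstacle here: the proof is formally dual to Proposition~\ref{p: F send projectives to H0 acyclics}, with the adjunction \eqref{eq: strange cohomology adjuntion 1} playing the role that Proposition~\ref{p: main functor isomorphism} played there, and the injectivity of $I$ replacing the projectivity of $P$ at the final step. The only place where care is needed is to make sure the isomorphism \eqref{eq: strange cohomology adjuntion 1} is applied naturally to the whole complex $P_{\bullet}$ so that it identifies the two cochain complexes (not merely their individual components), which is immediate from the naturality of $\zeta$ stated just above \eqref{eq: strange cohomology adjuntion 1}.
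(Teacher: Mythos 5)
Your proof is correct and follows essentially the same route as the paper: both pass from $\operatorname{Ext}^n_{K_\mathrm{par}G}(\mathcal{B},\mathcal{G}(I))$ to $H^n\bigl(\hom_{S^e}(P_\bullet\otimes_{\mathcal{B}}S,\,I)\bigr)$ via the adjunction \eqref{eq: strange cohomology adjuntion 1}, invoke Lemma~\ref{l: acylic heart} for the exactness of $P_\bullet\otimes_{\mathcal{B}}S$ in positive degrees, and conclude using the exactness of $\hom_{S^e}(-,I)$ for injective $I$. Your write-up is in fact slightly more careful than the paper's (which contains a couple of notational slips), but the argument is the same.
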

\begin{proof}
    Let \(Q\) be any injective $S^e$-module.
    Then, by Lemma~\ref{l: acylic heart}, $Q_{\bullet}\otimes_{\mathcal{B}} S$ is exact for all $n \geq 1$.
    Therefore,
    \begin{align*}
        H_{\mathrm{par}}^{n}(G, \hom_{A^{e}}(A, Q))
        &= \operatorname{Ext}_{K_\mathrm{par}G}^{n}(\mathcal{B}, \hom_{A^{e}}(A, Q)) \\
        &\cong
        H^n\big(\hom_{K_\mathrm{par}G}\big(P_\bullet, \hom_{A^{e}}(A, Q)\big)\big) \\
        (\text{by } \eqref{eq: strange cohomology adjuntion 1})
        &\cong
        H^n(\hom_{A^{e}}((P_\bullet \otimes_{\mathcal{B}} S), Q)) = 0,
    \end{align*}
    for all \(n \geq 1\) since \(Q\) is injective as an \(S^{e}\)-module.
    That is, \(\mathcal{G}(Q)\) is \(H^{0}_{\mathrm{par}}(G, -)\)-acyclic.
\end{proof}

Hence, combining isomorphism \eqref{eq: cohomology main functor isomorphism} and Proposition~\ref{p: mG send injectives to acyclics}, we obtain the cohomology version of Theorem~\ref{t: homology spectral sequence of an epsilon ring}.

\begin{theorem} \label{t: cohomology spectral sequence of an epsilon ring}
    Let \(S\) be an epsilon-strongly \(G\)-graded algebra, let \(A := S_1\), and let \(M\) be an \(S^{e}\)-module. Then, there exists a cohomology spectral sequence that converges to the Hochschild cohomology of \(S\) with coefficients in \(M\): 
    \[
        E_{2}^{p,q} = H^{p}_\mathrm{par}(G, H^{q}(A, M)) \Rightarrow H^{p+q}(S, M).
    \]
\end{theorem}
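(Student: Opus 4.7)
The plan is to apply Grothendieck's spectral sequence Theorem to the composition of left exact functors
\[
    \lmod{S^{e}} \xrightarrow{\mathcal{G}} \lmod{K_\mathrm{par}G} \xrightarrow{H^{0}_\mathrm{par}(G,-)} \lmod{K}.
\]
Three hypotheses must be verified: that the composite computes the Hochschild cohomology $H^0(S,-)$ on the zeroth level, that $\mathcal{G}$ sends injective $S^{e}$-modules to $H^{0}_\mathrm{par}(G,-)$-acyclic objects, and that the right derived functors of $\mathcal{G}$ recover the Hochschild cohomology of $A$.

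The first two conditions are already recorded above: the equality of zeroth functors $H^{0}_\mathrm{par}(G,-) \circ \mathcal{G} \cong H^{0}(S,-)$ is exactly the isomorphism \eqref{eq: cohomology main functor isomorphism}, and the acyclicity statement is Proposition~\ref{p: mG send injectives to acyclics}. Thus the entire argument reduces to the derived-functor identification, which is the cohomological analogue of part (iii) of Proposition~\ref{p: F HA isomorphism}.

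For that identification I would argue by a change-of-rings reasoning dual to the one used in the homological case. By (ii) of Proposition~\ref{p: F HA isomorphism}, the algebra $S^{e}$ is projective, hence flat, as an $A^{e}$-module. Therefore restriction of scalars along $A^{e} \to S^{e}$ preserves injectives, so any injective resolution $M \to I^{\bullet}$ in $\lmod{S^{e}}$ remains an injective resolution in $\lmod{A^{e}}$. Applying $\hom_{A^{e}}(A,-)$ then computes $\operatorname{Ext}^{\bullet}_{A^{e}}(A,M) = H^{\bullet}(A,M)$, yielding $\mathbf{R}^{\bullet}\mathcal{G}(M) \cong H^{\bullet}(A,M)$. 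With all three ingredients assembled, Grothendieck's theorem produces the second-quadrant spectral sequence
\[
    E_{2}^{p,q} = \mathbf{R}^{p}H^{0}_\mathrm{par}\bigl(G, \mathbf{R}^{q}\mathcal{G}(M)\bigr) = H^{p}_\mathrm{par}\bigl(G, H^{q}(A,M)\bigr) \Rightarrow H^{p+q}(S, M),
\]
as desired.

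The main (and only genuinely new) obstacle is the preservation-of-injectives step; all other pieces are formal consequences of results already established in the excerpt. In particular, no additional hypothesis on $S$ or on $M$ beyond those underlying Proposition~\ref{p: F HA isomorphism} is required, so the proof is essentially an invocation of the Grothendieck machinery once the three verifications above are in place.
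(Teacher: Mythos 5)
Your proposal is correct and follows essentially the same route as the paper: the paper obtains the theorem by combining the isomorphism \eqref{eq: cohomology main functor isomorphism} with Proposition~\ref{p: mG send injectives to acyclics} via the Grothendieck spectral sequence, leaving the identification \(\mathbf{R}^{\bullet}\mathcal{G}(M)\cong H^{\bullet}(A,M)\) implicit as the dual of Proposition~\ref{p: F HA isomorphism}(iii). Your explicit justification of that step—restriction along \(A^{e}\to S^{e}\) preserves injectives because \(S^{e}\) is projective, hence flat, over \(A^{e}\)—is exactly the intended argument.
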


Observe that Theorems~\ref{t: homology spectral sequence of an epsilon ring} and \ref{t: cohomology spectral sequence of an epsilon ring} generalize the spectral sequences obtained in \cite{Article_Alvares-Alves-Redondo_2017_COPSP} and \cite{Article_Dokuchaev-Jerez_2023_TTPGAAOPCP}.

\section{Splitting by conjugate classes}
\label{sec: splitting by conjugate classes}

Our goal is to express the homology spectral sequence of Theorem~\ref{t: homology spectral sequence of an epsilon ring} as a direct sum of spectral sequences indexed by the conjugacy classes of \(G\),
thus generalizing the results of \cite{Article_Lorenz_1992_OHGA}.

Let $R$ be a $G$-graded $K$-algebra.
A $G$-graded $R$-bimodule $X$ is an $R$-bimodule with a $G$-grading such that
\(R_g \cdot X_h \subseteq X_{gh} \text{ and } X_h \cdot R_g \subseteq X_{hg}\).
A morphism $f:X \to Y$ of $G$-graded $R$-bimodules is a map of $R$-bimodules that respects the $G$-grading, i.e., \(f(X_g) \subseteq Y_g, \, \forall g \in G\).
We denote by \(\lmod{S^{e}}_{\mathrm{gr}}\) the category of $G$-graded $S$-bimodules.

\textbf{From now on} we fix an epsilon-strongly graded algebra $S$ and \(G\)-graded \(S^{e}\)-bimodule \(M\).
For $g \in G$ it will be convenient for us to denote the conjugacy class of $g$ by $\overline{g}$, and we write $\operatorname{Conj}(G)$ for the set of all conjugacy classes of $G$.
Moreover, we denote by $m_g$ a homogeneous element of degree $g$ in $M$ (i.e.\ $m_g\in M_g$).

It is well-known that the Hochschild complex \(C_{\bullet}(S, M)\) splits by the conjugacy classes of $G$ in the following way
\[
    C_n(S, M):=\bigoplus_{\overline{g} \in \operatorname{Conj}(G) } C_n^{\overline{g} }(S, M),
\]
where $C_n^{\overline{g} }(S, M)$ is the submodule of $C_n(S, M)$ spanned by
\[
    \{m_{g_0} \otimes s_{g_1} \otimes \ldots \otimes s_{g_n} \in C_n(S, M) : g_0g_1 \ldots g_n \in \overline{g}\}.
\]
Writing \(H_\bullet^{\overline{g}}(S, M):= H_\bullet(C_n^{\overline{g} }(S, M),b)\).
One obtains the direct sum decomposition
\begin{equation} \label{eq: Hn splitting by conjugacy classes}
    H_n(S, M):=\bigoplus_{\overline{g} \in \operatorname{Conj}(G) } H_n^{\overline{g} }(S, M).
\end{equation}

\begin{lemma} \label{l: EPgrad}
    For any $G$-graded $S$-bimodule $X$, there exists a projective resolution $(P_\bullet,\partial)$ of $X$ in $\lmod{S^e}$ such that $(P_\bullet,\partial)$ is also a projective resolution of $X$ in $\lmod{S^{e}}_\mathrm{gr}$.
    In particular, $\lmod{S^{e}}_{\mathrm{gr}}$ has enough projectives.
\end{lemma}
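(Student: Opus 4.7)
The plan is to construct, for each $g \in G$, a free graded $S$-bimodule on one homogeneous generator of degree $g$, and then to assemble these into a single complex that is a projective resolution in both categories at once. Concretely, I would set $F(g) := S \otimes_K S$ with the grading
\[
    F(g)_h \;:=\; \bigoplus_{\substack{a, c \in G \\ agc = h}} S_a \otimes_K S_c,
\]
so that $1 \otimes 1$ lies in degree $g$. Associativity in $G$ immediately gives $S_b \cdot F(g)_h \subseteq F(g)_{bh}$ and $F(g)_h \cdot S_b \subseteq F(g)_{hb}$, so $F(g)$ is a $G$-graded $S$-bimodule.

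Next I would verify that $F(g)$ is projective in both $\lmod{S^{e}}$ and $\lmod{S^{e}}_{\mathrm{gr}}$. As an ungraded $S^{e}$-module it is free on $1 \otimes 1$, hence projective. In the graded setting, evaluation at $1 \otimes 1$ furnishes a natural isomorphism between the $\lmod{S^{e}}_{\mathrm{gr}}$-hom from $F(g)$ to any graded bimodule $Y$ and its degree-$g$ component $Y_g$; since a sequence in $\lmod{S^{e}}_{\mathrm{gr}}$ is exact if and only if it is degree-wise exact, which coincides with exactness in $\lmod{S^{e}}$, the functor $Y \mapsto Y_g$ is exact and so $F(g)$ is graded-projective.

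Given a graded $S$-bimodule $X$, I would then cover it by $P_0 := \bigoplus_{i \in I} F(g_i)$ indexed by any set $\{x_i \in X_{g_i}\}_{i \in I}$ of homogeneous generators, via the graded surjection $P_0 \twoheadrightarrow X$ sending the $i$-th copy of $1 \otimes 1$ to $x_i$. Its kernel is a graded sub-bimodule of $P_0$, so iterating the construction produces a complex $(P_\bullet,\partial)$ whose terms are projective in both categories; the exactness equivalence of the previous paragraph shows that $(P_\bullet,\partial)$ serves as a projective resolution of $X$ simultaneously in $\lmod{S^{e}}$ and in $\lmod{S^{e}}_{\mathrm{gr}}$, and the ``enough projectives'' claim is immediate from the step-zero surjection. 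I do not anticipate any serious obstacle; the argument is essentially formal once $F(g)$ is in place, the only substantive ingredients being the degree-wise characterization of graded exactness and the hom-evaluation identification $\hom_{\mathrm{gr}}(F(g),Y) \cong Y_g$.
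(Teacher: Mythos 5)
Your proof is correct and follows essentially the same route as the paper: both construct free $S$-bimodules on homogeneous generators, equipped with the shifted grading $\bigoplus_{agc=h} S_a \otimes_K S_c$, surject onto $X$, and iterate on the graded kernel. You are somewhat more explicit than the paper (which only exhibits the degree-zero surjection onto a graded-free cover), spelling out graded projectivity via $\hom_{\mathrm{gr}}(F(g),Y)\cong Y_g$ and the degree-wise characterization of exactness, but the underlying idea is identical.
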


\begin{proof}
    It is enough to show that for any $G$-graded $S$-bimodule $X$ there exists a free $G$-graded $S$-bimodule $\tilde{X}$ (that is, $\tilde{X}$ is a free $S^e$-$module$), and a surjective map $f:\tilde{X} \to X$ of $G$-graded $S$-bimodules.
    Let $\tilde{X}$ be the free $S$-bimodule generated by the set $\sqcup_{g \in G} X_{g}$, thus $\tilde{X}$ has a natural $G$-grading given by 
    \[
        \tilde{X}_g := \bigoplus_{\substack{u,t,h \in G, \\uth=g}} S_u X_t S_h.
    \]
    Notice that the epimorphism of $f:\tilde{X} \to X$, obtained by the universal property of free modules from the inclusion map $\sqcup_{g \in G} X_{g} \to X$, preserves the $G$-grading.
\end{proof}

Now we consider the restriction
$$
    \mathcal{F}|_{\lmod{S^{e}}_{\mathrm{gr}}} \colon \lmod{S^e}_{\mathrm{gr}} \to \lmod{K_\mathrm{par}G}
$$
of $\mathcal{F}$ to $\lmod{S^e}_{\mathrm{gr}}$.
Since $\lmod{S^e}_{\mathrm{gr}}$ has enough projectives, the left derived functor of $\mathcal{F}|_{\lmod{S^e}_{\mathrm{gr}}}$ is well defined.
In particular, by Lemma~\ref{l: EPgrad}, for any $G$‑graded $S$‑bimodule $M$ we conclude that 
\[
    \mathbf{L}_\bullet(\mathcal{F}|_{\lmod{S^e}_{\mathrm{gr}}}(-))(X) = \mathbf{L}_\bullet(\mathcal{F}(-))(X) = H_\bullet(S, X).
\]

For a $G$‑graded $S$‑bimodule~$X$, set
\[
  X_{\overline g}
  :=
  \bigoplus_{h\in\overline g} X_h
  \quad(\overline g\in \operatorname{Conj}(G)).
\]
Since each $X_{\overline g}$ is an $A$‑bimodule, we obtain
\[
  \mathcal F(X)
  \cong
  \bigoplus_{\overline g\in \operatorname{Conj}(G)}\bigl(A\otimes_{A^e}X_{\overline g}\bigr).
\]
Recall that $A\otimes_{A^e}X$ carries the $K_\mathrm{par}G$‑action from \eqref{eq: partial representation on H0}.  Moreover, each summand 
\(
  A\otimes_{A^e}X_{\overline g}
\)
is naturally a $K_\mathrm{par}G$‑module, since $X_{\overline g}$ inherits its action from~$X$.

We then define a functor
\[
    \mathcal{F}^{\overline g}: \lmod{S^{e}}_\mathrm{gr} \rightarrow \lmod{K_\mathrm{par}G}
\]
by
\[
    \mathcal{F}^{\overline g}(X):=A\otimes_{A^e}X_{\overline g},
\]
and on morphisms $f: X\to Y$ by
\[
    \mathcal{F}^{\overline g}(f):=\operatorname{id}\otimes_{A^{e}} f|_{X_{\overline g}}.
\]

From the definition of $\mathcal{F}^{\overline{g}}$ we obtain the following proposition

\begin{proposition} \label{p: F split}
    The functor $\mathcal{F}^{\overline{g}}$ is additive, right exact and 
    \[
        \mathcal{F}|_{\lmod{S^{e}}_\mathrm{gr}}(-)= \bigoplus_{\overline{g}\in \operatorname{Conj}(G)} \mathcal{F}^{\overline{g}}(-).
    \]
\end{proposition}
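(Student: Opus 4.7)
The plan is to base all three assertions on a single structural observation: since $A = S_1$ lies in degree one, each homogeneous component $X_h$ of a $G$-graded $S$-bimodule $X$ is an $A$-subbimodule, and grouping these by conjugacy class gives an $A$-bimodule decomposition $X = \bigoplus_{\overline{g}\in\operatorname{Conj}(G)} X_{\overline{g}}$. Applying $A\otimes_{A^e}-$, which preserves direct sums, immediately yields an isomorphism of $K$-modules
\[
    \mathcal{F}(X) \;=\; A\otimes_{A^e} X \;\cong\; \bigoplus_{\overline{g}\in\operatorname{Conj}(G)} \bigl(A\otimes_{A^e} X_{\overline{g}}\bigr) \;=\; \bigoplus_{\overline{g}\in\operatorname{Conj}(G)} \mathcal{F}^{\overline{g}}(X).
\]

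The crucial step is to upgrade this to a decomposition in $\lmod{K_\mathrm{par}G}$ by showing that each summand $A\otimes_{A^e} X_{\overline{g}}$ is stable under the partial representation of \eqref{eq: partial representation on H0}. Here I would trace degrees: fix $s\in G$ and $m_h\in X_h$ with $h\in\overline{g}$; since $R_s^i\in S_s$ and $L_{s^{-1}}^i\in S_{s^{-1}}$, each summand $R_s^i\, a\, m_h\, L_{s^{-1}}^i$ has degree $shs^{-1}$, and $shs^{-1}\in\overline{g}$. Hence $\pi_s\bigl(A\otimes_{A^e} X_{\overline{g}}\bigr)\subseteq A\otimes_{A^e} X_{\overline{g}}$, so the decomposition is one of $K_\mathrm{par}G$-modules, which in particular confirms that $\mathcal{F}^{\overline{g}}$ really lands in $\lmod{K_\mathrm{par}G}$. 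This is the only non-formal content of the proposition.

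Additivity of $\mathcal{F}^{\overline{g}}$ then follows because the assignment $X\mapsto X_{\overline{g}}$ commutes with finite direct sums in $\lmod{S^e}_\mathrm{gr}$ and $A\otimes_{A^e}-$ is additive. For right exactness, a right exact sequence $X\to Y\to Z\to 0$ in $\lmod{S^e}_\mathrm{gr}$ decomposes componentwise into a direct sum of right exact sequences $X_{\overline{g}}\to Y_{\overline{g}}\to Z_{\overline{g}}\to 0$ of $A$-bimodules, and the right exact functor $A\otimes_{A^e}-$ preserves each of them; passing to the $\overline{g}$-summand then gives right exactness of $\mathcal{F}^{\overline{g}}$.

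The main obstacle is precisely the $K_\mathrm{par}G$-stability check in the middle paragraph: it is essential that one coarsens the grading by conjugacy classes rather than by any finer partition of $G$, because $\pi_s$ shifts degree $h$ to $shs^{-1}$ via the local units appearing in the decomposition $1_{s^{-1}}=\sum_i L_{s^{-1}}^i R_s^i$. Once this degree bookkeeping is in hand, the rest of the proposition is a formal consequence of the distributivity of $A\otimes_{A^e}-$ over direct sums.
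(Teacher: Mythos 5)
Your proof is correct and follows the same route the paper takes: the paper states this proposition without proof, deriving it from the preceding observation that $\mathcal{F}(X)\cong\bigoplus_{\overline g}\bigl(A\otimes_{A^e}X_{\overline g}\bigr)$ with each summand inheriting the $K_{\mathrm{par}}G$-action from $X$. Your explicit degree bookkeeping (that $R_s^i\,a\,m_h\,L_{s^{-1}}^i$ lands in degree $shs^{-1}\in\overline g$) simply makes precise the stability claim the paper asserts, and the additivity and right-exactness arguments are the expected formal consequences.
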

\begin{corollary}\label{c: graded projectives are acyclic}
    Let \(X\) be a projective \(G\)-graded \(S\)-bimodule that is also projective as an \(S\)-bimodule.
    Then \(\mathcal{F}^{\overline{g}}(X)\) is \(H_{0}^{\mathrm{par}}(G, -)\)-acyclic for all \(\overline{g} \in \operatorname{Conj}(G)\).
\end{corollary}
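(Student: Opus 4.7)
The plan is to combine Proposition~\ref{p: F send projectives to H0 acyclics}, the conjugacy-class decomposition of Proposition~\ref{p: F split}, and the fact that partial group homology commutes with direct sums in the coefficient variable. The two hypotheses on $X$ play complementary roles: projectivity in $\lmod{S^{e}}$ will feed the acyclicity statement, while the graded structure enables the decomposition.

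First, I would apply Proposition~\ref{p: F send projectives to H0 acyclics}: since $X$ is projective as an $S^{e}$-module, $\mathcal{F}(X)$ is $H_{0}^{\mathrm{par}}(G,-)$-acyclic, so $H_{n}^{\mathrm{par}}(G,\mathcal{F}(X)) = 0$ for every $n \geq 1$.

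Next, since $X$ is $G$-graded, Proposition~\ref{p: F split} furnishes an isomorphism
\[
\mathcal{F}(X) \;\cong\; \bigoplus_{\overline{g}\in\operatorname{Conj}(G)} \mathcal{F}^{\overline{g}}(X)
\]
of $K_{\mathrm{par}}G$-modules. Because $H_{\bullet}^{\mathrm{par}}(G,-) = \operatorname{Tor}_{\bullet}^{K_{\mathrm{par}}G}(\mathcal{B},-)$ commutes with arbitrary direct sums in its second argument, this decomposition descends to homology:
\[
0 \;=\; H_{n}^{\mathrm{par}}(G,\mathcal{F}(X)) \;\cong\; \bigoplus_{\overline{g}\in\operatorname{Conj}(G)} H_{n}^{\mathrm{par}}(G,\mathcal{F}^{\overline{g}}(X))
\]
for all $n\geq 1$, forcing each summand to vanish.

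There is no substantial obstacle here; the argument is essentially a three-line deduction from results already proved. The only point I would verify carefully is that the partial representation \eqref{eq: partial representation on H0} restricted to the grading-indexed summands really lands inside the corresponding conjugacy-class summand of $\mathcal{F}(X)$ — equivalently, that $\pi_{g}$ maps the $h$-homogeneous part into the $ghg^{-1}$-homogeneous part. This is immediate from $R_{g}^{i}\in S_{g}$ and $L_{g^{-1}}^{i}\in S_{g^{-1}}$, so conjugacy classes are preserved and the direct-sum splitting of Proposition~\ref{p: F split} is compatible with the $K_{\mathrm{par}}G$-action used to compute partial group homology.
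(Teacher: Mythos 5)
Your argument is correct and matches the intended deduction (the paper states this corollary without proof, as it follows immediately from Proposition~\ref{p: F send projectives to H0 acyclics}, the decomposition of Proposition~\ref{p: F split}, and additivity of $\operatorname{Tor}^{K_{\mathrm{par}}G}_{\bullet}(\mathcal{B},-)$ over direct sums). Your closing verification that $\pi_g$ sends the $h$-component into the $ghg^{-1}$-component is exactly the compatibility the paper asserts when declaring each $A\otimes_{A^e}X_{\overline{g}}$ a $K_{\mathrm{par}}G$-module.
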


\begin{corollary}\label{c: split H0 S iso}
    Let \(M\) be a \(G\)-graded \(S\)-bimodule.
    Then \( \mathbf{L}_\bullet\bigl(H^\mathrm{par}_{0}(G, - ) \circ \mathcal{F}^{\overline{g}}\bigr)(M) \cong H_\bullet^{\overline{g}}(S,M)\).
\end{corollary}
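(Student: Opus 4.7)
The plan is to adapt the derived-functor computation behind Theorem~\ref{t: homology spectral sequence of an epsilon ring} to the graded category, replacing $\mathcal{F}$ by $\mathcal{F}^{\overline g}$, and then to identify the resulting $\overline g$-indexed summand of $H_\bullet(S,M)$ with the Hochschild piece $H_\bullet^{\overline g}(S,M)$ from \eqref{eq: Hn splitting by conjugacy classes}. First, using Lemma~\ref{l: EPgrad}, I fix a projective resolution $P_\bullet\twoheadrightarrow M$ in $\lmod{S^{e}}_{\mathrm{gr}}$ whose terms are also projective in $\lmod{S^e}$. Since $H_{0}^{\mathrm{par}}(G,-)\circ\mathcal{F}^{\overline g}$ is a composition of right-exact additive functors (by Proposition~\ref{p: F split}), the definition of the left derived functor gives
\[
\mathbf{L}_\bullet\bigl(H_{0}^{\mathrm{par}}(G,-)\circ\mathcal{F}^{\overline g}\bigr)(M)
\cong H_\bullet\bigl(H_{0}^{\mathrm{par}}\bigl(G,\mathcal{F}^{\overline g}(P_\bullet)\bigr)\bigr).
\]

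Summing over $\overline g\in\operatorname{Conj}(G)$ and invoking Proposition~\ref{p: F split}, the commutation of $H_{0}^{\mathrm{par}}(G,-)=\mathcal{B}\otimes_{K_\mathrm{par}G}-$ with direct sums, and Corollary~\ref{c: H0 F isomorphism}, gives
\[
\bigoplus_{\overline g}\mathbf{L}_\bullet\bigl(H_{0}^{\mathrm{par}}(G,-)\circ\mathcal{F}^{\overline g}\bigr)(M)
\cong H_\bullet\bigl(H_{0}(S,P_\bullet)\bigr)
\cong H_\bullet(S,M),
\]
the last isomorphism because $P_\bullet$ is also an $S^e$-projective resolution of $M$.

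It remains to match the $\overline g$-summand on the left with $H_\bullet^{\overline g}(S,M)$. At $n=0$ this is direct: tracing a homogeneous element $x\in X_h$ with $h\in\overline g$ through the isomorphism $\gamma$ of Proposition~\ref{p: main functor isomorphism} (which underlies Corollary~\ref{c: H0 F isomorphism}) shows that its class in $X/[S,X]$ is sent into $H_{0}^{\mathrm{par}}(G,\mathcal{F}^{\overline g}(X))$, giving a grading-preserving isomorphism $H_0(S,X)\cong\bigoplus_{\overline g}H_{0}^{\mathrm{par}}(G,\mathcal{F}^{\overline g}(X))$ for every $G$-graded $S$-bimodule $X$. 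To extend the matching to all $n$, I would compare the $\operatorname{Conj}(G)$-graded complex $S\otimes_{S^e}P_\bullet$ with the Hochschild complex $C_\bullet(S,M)=B_\bullet(S)\otimes_{S^e}M$ through the bar-sandwich double complex $B_\bullet(S)\otimes_{S^e}P_\bullet$: the two resulting projection quasi-isomorphisms preserve the $\operatorname{Conj}(G)$-grading, and hence the decomposition of $H_\bullet(S,M)$ induced by our resolution agrees with the Hochschild-complex decomposition \eqref{eq: Hn splitting by conjugacy classes}. This last reconciliation between the two $\operatorname{Conj}(G)$-decompositions is the main obstacle; the rest of the argument is essentially a routine derived-functor bookkeeping exercise.
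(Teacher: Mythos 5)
Your argument is correct and follows essentially the same route as the paper: the key points in both are that the natural isomorphism $\gamma$ of Proposition~\ref{p: main functor isomorphism} restricts to an identification $H_0^{\mathrm{par}}(G,-)\circ\mathcal{F}^{\overline{g}}\cong H_0^{\overline{g}}(S,-)$, and that the balanced-Tor double complex $B_\bullet(S)\otimes_{S^e}P_\bullet$ reconciles the resolution-based decomposition with the Hochschild-complex decomposition \eqref{eq: Hn splitting by conjugacy classes}. The paper compresses the latter step into the phrase ``the usual spectral-sequence argument showing that $\operatorname{Tor}$ is balanced,'' whereas you spell out that the two projection quasi-isomorphisms preserve the $\operatorname{Conj}(G)$-grading, which is exactly the point that needs checking.
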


\begin{proof}
    Recall from Corollary~\ref{c: H0 F isomorphism} that the isomorphism
    $$
    \gamma:\mathcal{B}\otimes_{K_{\mathrm{par}}G}\bigl(A\otimes_{A^e}M\bigr)\to S\otimes_{S^e}M,
    $$
    defined by
    $$
    \gamma\bigl(b\otimes_{K_{\mathrm{par}}G}(a\otimes_{A^e}m)\bigr)
    =(b\cdot1_S)\otimes_{S^e}(a\,m),
    $$
    induces a natural isomorphism of functors
    $$
    H_0^{\mathrm{par}}(G,-)\,\circ\,\mathcal{F}
    \cong S\otimes_{S^e}-.
    $$
    Moreover, one checks that $\gamma$ respects the decomposition by conjugacy classes of any $G$–graded $S$–bimodule $M$, namely
    $$
    \gamma\bigl(H_0^{\mathrm{par}}(G,\,A\otimes_{A^e}M_{\overline{g}})\bigr)
    \subseteq H_0^{\overline{g}}(S,M).
    $$
    Hence, since $\gamma$ is natural, it restricts to an isomorphism
    $$
    H_0^{\mathrm{par}}(G,-)\,\circ\,\mathcal{F}^{\overline{g}}
    \overset{\gamma}{\to}
    H_0^{\overline{g}}(S,-).
    $$
    Finally, the usual spectral–sequence argument showing that $\mathrm{Tor}$ is balanced (see, e.g., \cite[Application 5.6.3]{Book_Weibel_1994_AITHA}) yields
    \[
        \mathbf{L}_\bullet\bigl(H_0^{\overline{g}}(S,-)\bigr)(M)
        \cong H_\bullet^{\overline{g}}(S,M),
    \]
    where the right-hand module is the one defined in the decomposition \eqref{eq: Hn splitting by conjugacy classes},
    for any $G$–graded $S^e$–bimodule $M$.
\end{proof}

Combining Theorem \ref{t: homology spectral sequence of an epsilon ring}, Lemma~\ref{l: EPgrad}, Proposition \ref{p: F split} and Corollary~\ref{c: split H0 S iso} we obtain the following theorem.

\begin{theorem} \label{t: spectral sequence split}
    Let $M$ be a $G$-graded $S$-bimodule. Then, 
    \[
        H_\bullet (S,M)= \bigoplus_{\overline{g} \in \operatorname{Conj}(G)} H_\bullet^{\overline{g}}(S,M),
    \]
    where $H_\bullet^{\overline{g}}(S,M)$ is the limit of a spectral sequence $_{\overline{g}}E$ such that     
    \[
        _{\overline{g}}E_{p,q}^2 = H^\mathrm{par}_p(G,H_q(A,M_{\overline{g}})) \Rightarrow  H_{p+q}^{\overline{g}}(S,M).
    \]
\end{theorem}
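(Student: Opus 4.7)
The plan is to apply the Grothendieck composite-functor spectral sequence (\cite[Theorem 10.48]{Book_Rotman_2008_AITHA}) separately, one for each conjugacy class $\overline g \in \operatorname{Conj}(G)$, to the composition
\[
    H_{0}^{\mathrm{par}}(G,-)\circ\mathcal{F}^{\overline g}\colon \lmod{S^{e}}_{\mathrm{gr}} \longrightarrow \lmod{K},
\]
and then to assemble the resulting family of spectral sequences using Proposition~\ref{p: F split}. The direct-sum decomposition of the abutment is already recorded in \eqref{eq: Hn splitting by conjugacy classes}, so the real content is to identify the $E^{2}$-page and the abutment of each spectral sequence.

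First I would verify the three hypotheses of Grothendieck's theorem. Enough projectives in $\lmod{S^{e}}_{\mathrm{gr}}$ is Lemma~\ref{l: EPgrad}; right exactness of $\mathcal{F}^{\overline g}$ is part of Proposition~\ref{p: F split}; and the acyclicity hypothesis—that $\mathcal{F}^{\overline g}$ sends graded projectives to $H_{0}^{\mathrm{par}}(G,-)$-acyclic modules—is Corollary~\ref{c: graded projectives are acyclic}.

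Next I would compute the derived functors of $\mathcal{F}^{\overline g}$. Fix a graded projective resolution $P_{\bullet}\to M$ as in Lemma~\ref{l: EPgrad}, which is simultaneously a projective resolution in $\lmod{S^{e}}$. Proposition~\ref{p: F split} gives the componentwise splitting $\mathcal{F}(P_{\bullet})=\bigoplus_{\overline h}\mathcal{F}^{\overline h}(P_{\bullet})$, and Proposition~\ref{p: F HA isomorphism}(iii) identifies $H_{\bullet}(\mathcal{F}(P_{\bullet}))$ with $H_{\bullet}(A,M)$. Since $\mathcal{F}^{\overline g}(P_{\bullet})=A\otimes_{A^{e}}(P_{\bullet})_{\overline g}$ depends only on the $\overline g$-homogeneous part of $M$, passing to homology yields $\mathbf{L}_{q}\mathcal{F}^{\overline g}(M)\cong H_{q}(A,M_{\overline g})$. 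Feeding this into Grothendieck's theorem gives the required ${}_{\overline g}E^{2}_{p,q}=H_{p}^{\mathrm{par}}(G,H_{q}(A,M_{\overline g}))$, while Corollary~\ref{c: split H0 S iso} identifies the abutment $\mathbf{L}_{p+q}(H_{0}^{\mathrm{par}}(G,-)\circ\mathcal{F}^{\overline g})(M)$ with $H_{p+q}^{\overline g}(S,M)$. Summing over $\overline g$ and comparing with \eqref{eq: Hn splitting by conjugacy classes} then yields the decomposition of $H_{\bullet}(S,M)$.

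The main delicacy I expect is checking that the comparison isomorphism $\gamma$ from Corollary~\ref{c: split H0 S iso} is compatible with the direct-sum decomposition of $\mathcal{F}$ at the chain level, so that the Grothendieck spectral sequence attached to $H_{0}^{\mathrm{par}}(G,-)\circ\mathcal{F}^{\overline g}$ genuinely converges to the $\overline g$-summand $H_{\bullet}^{\overline g}(S,M)$ and not to some unidentified piece of $H_{\bullet}(S,M)$. Fortunately, this compatibility is already visible from the explicit formula for $\gamma$ used in the proof of Corollary~\ref{c: split H0 S iso}, which preserves conjugacy-class degrees on generators, so no further conceptual work is required.
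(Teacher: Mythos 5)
Your proposal is correct and follows essentially the same route as the paper, which proves the theorem precisely by combining Theorem~\ref{t: homology spectral sequence of an epsilon ring}, Lemma~\ref{l: EPgrad}, Proposition~\ref{p: F split} and Corollary~\ref{c: split H0 S iso}, i.e.\ by running the Grothendieck spectral sequence for each composite $H_{0}^{\mathrm{par}}(G,-)\circ\mathcal{F}^{\overline g}$ and summing over conjugacy classes. You merely make explicit the verification of the hypotheses and the identification $\mathbf{L}_{q}\mathcal{F}^{\overline g}(M)\cong H_{q}(A,M_{\overline g})$, which the paper leaves implicit.
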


\subsection{Analyzing the components of \texorpdfstring{\(_{\overline{g}}E_{p,q}^{2}\)}{E}}
By \cite[Theorem 3.24]{PhDThesis_Jerez_2024}
there exists an exact functor
\[
    \Lambda: \lmod{K_\mathrm{par}G} \to \lmod{KG}
\]
such that
\[
    H^\mathrm{par}_\bullet(G, X) \cong H_\bullet(G, \Lambda(X)),
\]
for all \(K_\mathrm{par}G\)-modules \(X\).
We will use \(\Lambda\) to give a more detailed description of each component \(H_{p}^\mathrm{par}(G, H_{q}(A,M)\).

We recall from \cite[Section 2.1]{PhDThesis_Jerez_2024} that \(\Lambda(X) = (KG \otimes X)/\mathcal{K}_G(X),\)
where \(\mathcal{K}_G(X)\) is the \(K\)-submodule of \(KG \otimes X\) generated by
the set
\[
    \{ g \otimes [h] x - gh \otimes e_{h^{-1}} x : g,h \in G \text{ and } x \in X \}.
\]
We write \(\lfloor g, x \rfloor\) for the class of \(g \otimes x\) in \(\Lambda(X)\).
The left action of \(G\) on \(\Lambda(X)\) is given by \(h \cdot \lfloor g, x \rfloor = \lfloor hg, x \rfloor\).
Moreover, we have natural morphism of \(K\)-modules 
\vspace*{-15pt}\begin{center}
    \begin{minipage}[t]{0.4\textwidth}
        \begin{align*}
            \iota: X &\to \Lambda(X) \\
            x &\mapsto \lfloor 1, x \rfloor
        \end{align*}
    \end{minipage}
    \begin{minipage}[t]{0.4\textwidth}
        \begin{align*}
            \lambda: \Lambda(X) &\to X \\
            \lfloor g, x \rfloor &\mapsto [g] \cdot x
        \end{align*}
    \end{minipage}
\end{center}
such that \(\lambda \circ \iota = \operatorname{id}_{X}\) (see \cite[Section 3.2.1]{PhDThesis_Jerez_2024} for more details).

From the definition of \(\Lambda(X)\) we have the following straightforward lemma.
\begin{lemma}\label{l: well define fucntions in the globalization}
    Let \(X\) be a \(K_\mathrm{par}G\)-module and \(Y\) be a \(KG\)-module.
    Suppose that \(f: KG \otimes X \to Y\) is a \(KG\)-linear map such that
    \[
        f(g \otimes [h] x) = f(gh \otimes e_{h^{-1}} x).
    \]
    Then, \(f\) induces a well-defined \(KG\)-linear map \(\overline{f}: \Lambda(X) \to Y,\)
    given by \(\overline{f}(\lfloor g, x \rfloor) = f(g \otimes x)\).
\end{lemma}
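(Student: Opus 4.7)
The plan is a direct application of the universal property of the quotient defining \(\Lambda(X)\). By construction \(\Lambda(X) = (KG \otimes X)/\mathcal{K}_G(X)\), so to produce \(\overline{f}\) it suffices to verify that \(f\) annihilates \(\mathcal{K}_G(X)\); once that is done, \(KG\)-linearity of \(\overline{f}\) will follow immediately from the \(KG\)-linearity of \(f\) together with the formula for the \(G\)-action on \(\Lambda(X)\).

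The first step is to observe that the hypothesis is written precisely to match the generators of \(\mathcal{K}_G(X)\). Indeed, these generators are the elements \(g \otimes [h]x - gh \otimes e_{h^{-1}}x\) with \(g, h \in G\), \(x \in X\), and the hypothesis says exactly that \(f\) vanishes on each one. Since \(f\) is in particular \(K\)-linear, this forces \(f(\mathcal{K}_G(X)) = 0\), and the universal property of the quotient yields a well-defined \(K\)-linear map \(\overline{f}: \Lambda(X) \to Y\) with \(\overline{f}(\lfloor g, x\rfloor) = f(g \otimes x)\). The second step is \(KG\)-linearity of \(\overline{f}\): for any \(h, g \in G\) and \(x \in X\),
\[
    \overline{f}(h \cdot \lfloor g, x\rfloor) = \overline{f}(\lfloor hg, x\rfloor) = f(hg \otimes x) = h \cdot f(g \otimes x) = h \cdot \overline{f}(\lfloor g, x\rfloor),
\]
using the formula \(h \cdot \lfloor g, x\rfloor = \lfloor hg, x\rfloor\) for the \(G\)-action on \(\Lambda(X)\) and the \(KG\)-linearity of \(f\) (whose \(KG\)-action on \(KG \otimes X\) is left multiplication on the first factor). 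Extending \(K\)-linearly gives \(KG\)-linearity on all of \(KG\).

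There is no real obstacle here; the only point worth pausing over is the exact matching between the hypothesis and the defining relations of \(\mathcal{K}_G(X)\), after which the rest is a formal check that any reader familiar with universal properties of quotients can carry out in a single line.
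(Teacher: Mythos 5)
Your proof is correct and is precisely the "straightforward" argument the paper has in mind — the paper in fact omits the proof entirely, stating only that the lemma follows from the definition of \(\Lambda(X)\). Your verification that the hypothesis matches the generators of \(\mathcal{K}_G(X)\), followed by the universal property of the quotient and the formal check of \(KG\)-linearity, fills in exactly the intended reasoning.
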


Our goal is to prove that whenever $M$ is a $G$\nobreakdash‑graded $S$\nobreakdash‑bimodule, the module \(\Lambda\bigl(H_{\bullet}(A, M)\bigr)\)
carries a natural $G$\nobreakdash‑grading which is compatible with its induced global $G$\nobreakdash‑action.
For this, we need the following results.

\begin{lemma}\label{l: only zero has zero orbit}
    Let \(z \in \Lambda(X)\) be such that \(\lambda(g \cdot z) = 0\) for all \(g \in G\).
    Then, \(z = 0\).
\end{lemma}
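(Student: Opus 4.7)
The strategy is to induct on the number of summands in a representation of $z$. Since $\lfloor h,x\rfloor+\lfloor h,y\rfloor=\lfloor h,x+y\rfloor$ follows from the $K$-bilinearity of $KG\otimes X$ and the quotient map, I would first collect like terms and write $z=\sum_{i=1}^{n}\lfloor h_i,x_i\rfloor$ with $h_1,\dots,h_n\in G$ pairwise distinct. For the base case $n=1$, the hypothesis at $g=h_1^{-1}$ reads $0=\lambda(h_1^{-1}\cdot z)=\lambda(\lfloor 1,x_1\rfloor)=x_1$, so $z=\lfloor h_1,0\rfloor=0$.

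For the inductive step with $n\ge 2$, evaluating the hypothesis at $g=h_1^{-1}$ gives
\[
    0=\lambda(h_1^{-1}\cdot z)=\sum_{i=1}^{n}[h_1^{-1}h_i]\,x_i\quad\text{in } X,
\]
and hence $x_1=-\sum_{i=2}^{n}[h_1^{-1}h_i]x_i$. Substituting this into the summand $\lfloor h_1,x_1\rfloor$ and applying the defining relation $\lfloor g,[h]x\rfloor=\lfloor gh,e_{h^{-1}}x\rfloor$ of $\Lambda(X)$ with $g=h_1$ and $h=h_1^{-1}h_i$ for each $i\ge 2$, I obtain
\[
    \lfloor h_1,x_1\rfloor
    =-\sum_{i=2}^{n}\lfloor h_1,[h_1^{-1}h_i]x_i\rfloor
    =-\sum_{i=2}^{n}\lfloor h_i,\,e_{h_i^{-1}h_1}x_i\rfloor.
\]
Combining with the remaining terms yields the rewriting
\[
    z=\sum_{i=2}^{n}\bigl\lfloor h_i,\,(1-e_{h_i^{-1}h_1})x_i\bigr\rfloor,
\]
which uses only the $n-1$ distinct group elements $h_2,\dots,h_n$. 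Since the hypothesis $\lambda(g\cdot z)=0$ for all $g\in G$ is a property of the element $z\in\Lambda(X)$ and is independent of any particular presentation, the inductive hypothesis applies to this shorter expression and forces $z=0$.

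The argument is essentially formal once one sees that the hypothesis at $g=h_j^{-1}$ plays the role of extracting the $h_j$-component of $z$ modulo the partial representation relations encoded in Lemma~\ref{p: computations rules}. The only point requiring some care is to ensure that the reduction genuinely lowers the number of distinct first coordinates; this is why one normalises the representation so that the $h_i$ are pairwise distinct and then uses the defining relation of $\mathcal K_G(X)$ to absorb the $\lfloor h_1,\cdot\rfloor$ summand into the remaining $\lfloor h_i,\cdot\rfloor$ terms.
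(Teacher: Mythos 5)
Your proof is correct and follows essentially the same route as the paper's: evaluate $\lambda$ at the inverse of one of the first coordinates to solve for the corresponding $x_i$, absorb that summand into the others via the defining relation $\lfloor g,[h]x\rfloor=\lfloor gh,e_{h^{-1}}x\rfloor$, and induct on the number of summands. Your explicit remark that the hypothesis depends only on $z$ and not on the chosen presentation is a point the paper leaves implicit, but the argument is the same.
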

\begin{proof}
    Let \(z = \sum_{i=0}^{n} \lfloor g_i, x_i \rfloor\) satisfying \(\lambda(g \cdot z) = 0\) for all \(g \in G\).
    Since \(\lambda(g_0^{-1} \cdot z) = 0\), one sees that 
    \[
        x_0 = -\sum_{i=1}^{n} [g_0^{-1}g_i] x_i.
    \]
    Hence,
    \[
        \lfloor g_0, x_0 \rfloor = -\sum_{i=1}^{n} \lfloor g_0, [g_0^{-1}g_i] x_i \rfloor = -\sum_{i=1}^{n} \lfloor g_i, e_{g_i^{-1}g_0} x_i \rfloor.
    \]
    Thus,
    \[
        z = \sum_{i=1}^{n} \lfloor g_i,x_i - e_{g_i^{-1}g_0} x_i \rfloor.
    \]
    Therefore, by induction there exists \(g \in G\) and \(x \in X\) such that \(z = \lfloor g, x \rfloor\).
    Finally, observe that
    \[
        0 = \lambda(g^{-1} \cdot z) = \lambda(\lfloor 1, x \rfloor) = x,
    \]
    whence, \(z = \lfloor g, 0 \rfloor = 0\).
\end{proof}

\begin{proposition}\label{p: G graded decomposition of Lambda X}
    Suppose that \(X\) is a left \(K_{\mathrm{par}}G\)\nobreakdash-module admitting a \(G\)\nobreakdash-grading \(X \;=\; \bigoplus_{s\in G} X_s,\)
    such that
    \([h] \cdot X_s \subseteq X_{hsh^{-1}}\) for all \(h,s\in G\).
    For each \(g\in G\), set 
    \[
        \Lambda(X)_g
        \;=\;
        \bigl\langle\,\lfloor h,x_s\rfloor 
        : hsh^{-1}=g,\;x_s\in X_s \bigr\rangle_K
        \;\subseteq\;\Lambda(X).
    \]
    Then
    \(\Lambda(X) = \bigoplus_{g\in G}\Lambda(X)_g\)
    is a \(G\)\nobreakdash-grading on \(\Lambda(X)\) such that
    \(h \cdot z \in \Lambda(X)_{hgh^{-1}}\) for all \(h,g \in G\) and \(z \in \Lambda(X)_g\).
\end{proposition}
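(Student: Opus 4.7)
The plan is to verify three claims in sequence: (a) the submodules $\Lambda(X)_g$ span $\Lambda(X)$; (b) their sum is direct; (c) the left $G$-action on $\Lambda(X)$ satisfies $h\cdot\Lambda(X)_g\subseteq\Lambda(X)_{hgh^{-1}}$. Items (a) and (c) will be essentially formal; the whole weight of the proof is in (b).

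For the spanning statement, I would decompose any generator $\lfloor h,x\rfloor$ by writing $x=\sum_s x_s$ according to the grading of $X$, so that $\lfloor h,x\rfloor=\sum_s \lfloor h,x_s\rfloor$ with each summand landing in $\Lambda(X)_{hsh^{-1}}$. The action compatibility follows at once from $h\cdot\lfloor k,x_s\rfloor=\lfloor hk,x_s\rfloor$ combined with $(hk)s(hk)^{-1}=h(ksk^{-1})h^{-1}$; this is checked on generators and extended by linearity.

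The directness in (b) is the main obstacle, and the key is to use Lemma~\ref{l: only zero has zero orbit} together with the retraction $\lambda$. The central observation I would establish first is that for any $z\in\Lambda(X)_g$ and any $k\in G$ one has $\lambda(k\cdot z)\in X_{kgk^{-1}}$: on a generator $\lfloor h,x_s\rfloor$ with $hsh^{-1}=g$, the computation $\lambda(k\cdot\lfloor h,x_s\rfloor)=[kh]\cdot x_s$ together with the hypothesis $[kh]\cdot X_s\subseteq X_{(kh)s(kh)^{-1}}=X_{kgk^{-1}}$ delivers the claim. Now given a finite relation $\sum_{g\in F} z_g=0$ with $z_g\in\Lambda(X)_g$, applying $\lambda\circ(k\cdot-)$ yields $\sum_g\lambda(k\cdot z_g)=0$, where the summands sit in pairwise distinct graded components of $X$ (because $g\mapsto kgk^{-1}$ is a bijection of $G$). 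Hence $\lambda(k\cdot z_g)=0$ for every $g\in F$, and since $k$ was arbitrary, Lemma~\ref{l: only zero has zero orbit} forces $z_g=0$.

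The only subtlety in this plan, beyond spotting the right lemma, is recognizing that the pair $(\lambda,G\text{-action})$ detects the conjugation-twisted grading of $X$ sharply enough to separate the components $\Lambda(X)_g$. Once that principle is in place, the three claims reduce to routine generator-level computations.
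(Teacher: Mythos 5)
Your proposal is correct and follows essentially the same route as the paper: both establish spanning and equivariance on generators, and both prove directness by observing that $\lambda(k\cdot z)\in X_{kgk^{-1}}$ for $z\in\Lambda(X)_g$, so that applying $\lambda\circ(k\cdot-)$ for all $k$ separates the components via the grading of $X$, and then invoking Lemma~\ref{l: only zero has zero orbit} to conclude each component vanishes. The only difference is cosmetic (you phrase directness as a finite relation, the paper as a trivial intersection).
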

\begin{proof}
    Clearly, \(\Lambda(X) = \sum_{g \in G} \Lambda(X)_g\) and that \(h \cdot \Lambda(X)_g \subseteq \Lambda(X)_{hgh^{-1}}\).
    It remains to show that this sum is direct.
    Let \(g \in G\) and \(z \in \Lambda(X)_g \cap \sum_{h \neq g} \Lambda(X)_h\).
    Then for every \(s \in G\),
    \(\lambda(s \cdot z) \in X_{sgs^{-1}} \cap \sum_{h \neq g} X_{shs^{-1}} = \{ 0 \}\).
    By Lemma~\ref{l: only zero has zero orbit}, it follows that \(z = 0\).
    Therefore \(\Lambda(X)_g \cap \sum_{h \neq g} \Lambda(X)_h = \{ 0 \}\).
\end{proof}

\begin{remark}\label{r: globalization grading preserve direct sums}
    Observe that the grading of \(\Lambda(X)\) preserves direct sums, i.e.,
    if \(X = \bigoplus_i X^{i}\) is a direct sum of \(G\)-graded \(K_\mathrm{par}G\)-modules
    such that \([h] \cdot x^{i} \in X^{i}_{hgh^{-1}}\)
    for all \(h,g \in G\) and \(x^{i} \in X^{i}_g\),
    then \(\Lambda(X)_g \cong \bigoplus_{i} \Lambda(X^{i})_g\).
    In particular, we can view \(X_{\overline{g}}\) itself as a \(G\)-graded \(K_\mathrm{par}G\)-module
    such that
    \[
        (X_{\overline{g}})_{t} = \left\{ 
        \begin{matrix}
            X_{t} & \text{if } t \in \overline{g}, \\
            \{ 0 \} & \text{otherwise.}
        \end{matrix}
        \right.
    \]
    Hence,
    \(X = \bigoplus_{\overline{g} \in \operatorname{Conj}(G)} X_{\overline{g}}\)
    as \(K_\mathrm{par}G\)-modules.
    Then
    \(\Lambda(X)_g \cong \bigoplus_{\overline{t} \in \operatorname{Conj}(G)} \Lambda(X_{\overline{t}})_g = \Lambda(X_{\overline{g}})_g\).
\end{remark}

To avoid confusion, if $N$ is a group and $Y$ is a $K_\mathrm{par}N$-module, we will denote its corresponding globalization functor by $\Lambda_N$. The class of $s \otimes y$ in $\Lambda_N(Y)$ will be denoted by $\lfloor s, y \rfloor_{N}$.
We denote the injection and projection maps by $\iota_N$ and $\lambda_N$, respectively.

\begin{proposition}\label{p: Psi is injective}
    Let \(G\) be a group and \(N \subseteq G\) a subgroup.
    Let \(M\) be a left \(K_{\mathrm{par}}G\)-module,
    and let \(V \subseteq M\) be a \(K\)-submodule which is also stable under the action of \(K_{\mathrm{par}}N\) (so that \(V\) becomes a left \(K_{\mathrm{par}}N\)-module).
    Then there exists an injective homomorphism of \(K_{\mathrm{par}}N\)-modules
    \(\Psi: \Lambda_{N}(V) \to \Lambda_{G}(M)\)
    such that
    \[
        \Psi(\lfloor s, x \rfloor_{N}) = \lfloor s, x \rfloor_{G}.
    \]
\end{proposition}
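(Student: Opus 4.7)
The plan is to construct $\Psi$ by descending the obvious $K$-linear map $KN\otimes V \to KG\otimes M \to \Lambda_G(M)$ through the defining quotient of $\Lambda_N(V)$, and to deduce injectivity from Lemma~\ref{l: only zero has zero orbit} applied on both sides.

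First I would define a $K$-linear map $\widetilde\Psi:KN\otimes V\to\Lambda_G(M)$ by $\widetilde\Psi(s\otimes v)=\lfloor s,v\rfloor_G$, using the inclusions $KN\hookrightarrow KG$ and $V\hookrightarrow M$. Since $V$ is $K_{\mathrm{par}}N$-stable, for any $s,h\in N$ and $v\in V$ the elements $[h]v$ and $e_{h^{-1}}v$ lie in $V$, and the generator $s\otimes[h]v-sh\otimes e_{h^{-1}}v$ of $\mathcal{K}_N(V)$ is simultaneously a generator of $\mathcal{K}_G(M)$. Hence $\widetilde\Psi$ vanishes on $\mathcal{K}_N(V)$ and descends to the desired $K$-linear map $\Psi:\Lambda_N(V)\to\Lambda_G(M)$ with $\Psi(\lfloor s,v\rfloor_N)=\lfloor s,v\rfloor_G$.

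Next, since $\Lambda_G(M)$ is a $KG$-module, restriction along $N\hookrightarrow G$ makes it a $KN$-module, and the $K_{\mathrm{par}}N$-structures on $\Lambda_N(V)$ and $\Lambda_G(M)$ both factor through $KN$. Therefore it suffices to verify $KN$-linearity, which reduces to the immediate identity $\Psi(h\cdot\lfloor s,v\rfloor_N)=\lfloor hs,v\rfloor_G=h\cdot\Psi(\lfloor s,v\rfloor_N)$ for $h\in N$.

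For injectivity, let $z=\sum_i\lfloor s_i,v_i\rfloor_N\in\ker\Psi$. For any $s\in N$, the element
\[
    \lambda_N(s\cdot z)=\sum_i[ss_i]\cdot v_i
\]
lies in $V$ (by $K_{\mathrm{par}}N$-stability of $V$), and viewed as an element of $M$ it equals $\lambda_G(s\cdot\Psi(z))=0$ because $\Psi(z)=0$ and $\lambda_G$ is $K$-linear and $KG$-equivariant in the obvious sense. Since $V\hookrightarrow M$ is injective, we conclude $\lambda_N(s\cdot z)=0$ in $V$ for every $s\in N$, and Lemma~\ref{l: only zero has zero orbit} applied to $\Lambda_N(V)$ then forces $z=0$.

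The main obstacle is the careful bookkeeping between the two globalizations $\Lambda_N$ and $\Lambda_G$: one has to ensure that the formulas defining each quotient are compatible with the inclusion $V\hookrightarrow M$, and that the projections $\lambda_N$ and $\lambda_G$ match on the image of $\Psi$. Once those compatibilities are in place, the injectivity criterion of Lemma~\ref{l: only zero has zero orbit} transports cleanly from $\Lambda_G(M)$ back to $\Lambda_N(V)$.
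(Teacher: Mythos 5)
Your proposal is correct and follows essentially the same route as the paper: well-definedness by checking that the defining relations of $\Lambda_N(V)$ are sent to relations of $\Lambda_G(M)$ (the paper packages this via Lemma~\ref{l: well define fucntions in the globalization}), and injectivity via the compatibility $\lambda_N=\lambda_G\circ\Psi$ combined with Lemma~\ref{l: only zero has zero orbit}. No substantive differences.
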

\begin{proof}
    By Lemma~\ref{l: well define fucntions in the globalization}, \(\Psi\) is well-defined. Indeed, let \(h', h \in N\) and \(x \in V\).
    Then
    \[
        \Psi(\lfloor h'h, e_{h^{-1}} x \rfloor_{N})
        = \lfloor h'h, e_{h^{-1}} x \rfloor_{G}
        = \lfloor h',  [h] x \rfloor_{G}
        = \Psi(\lfloor h',  [h] x \rfloor_{N}).
    \]
    Observe that \(\lambda_{N} = \lambda_G \circ \Psi\).
    Therefore, if \(z \in \ker \Psi\)
    then \(\lambda_{N}(h \cdot z) = \lambda_G(h \cdot \Psi(z)) = 0 \) for all \(h \in N\).
    Hence, by Lemma~\ref{l: only zero has zero orbit} we conclude that \(z = 0\) and therefore \(\Psi\) is injective.
\end{proof}

\begin{proposition}\label{p: char of conjugated graded modules}
    Let \(X\) be a \(K_\mathrm{par}G\)-module with a \(G\)-grading
    \(X = \oplus_{g \in G} X_g\) such that
    \([h] X_t \subseteq X_{hth^{-1}}\).
    Then, 
    \[
        H_\bullet^\mathrm{par}(G, X_{\overline{g}}) \cong H_\bullet(\mathcal{C}_g, \Lambda(X_{\overline{g}})_g),
    \]
    where \(X_{\overline{g}} = \oplus_{h \in \operatorname{Conj}(G)} X_h\) and \(\mathcal{C}_{g}\) is the centralizer of \(g\) in \(G\).
    Moreover, if for all \(t \in \overline{g}\) we have that 
    \[
        X_t = \sum_{\substack{h\in G \\ hgh^{-1}=t}} [h] \cdot X_g
    \]
    then
    \[
        H_\bullet^\mathrm{par}(G, X_{\overline{g}}) \cong H_\bullet^\mathrm{par}(\mathcal{C}_g, X_g).
    \]
\end{proposition}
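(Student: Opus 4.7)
The plan is to reduce both isomorphisms to statements about the globalization \(\Lambda\), and then invoke Shapiro's lemma to transfer from \(G\) to the centralizer \(\mathcal{C}_g\).

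For the first isomorphism, \cite[Theorem 3.24]{PhDThesis_Jerez_2024} gives \(H_\bullet^\mathrm{par}(G, X_{\overline{g}}) \cong H_\bullet(G, \Lambda(X_{\overline{g}}))\). Proposition~\ref{p: G graded decomposition of Lambda X} together with Remark~\ref{r: globalization grading preserve direct sums} shows that
\[
    \Lambda(X_{\overline{g}}) = \bigoplus_{t \in \overline{g}} \Lambda(X_{\overline{g}})_t
\]
as \(K\)-modules, with each \(h \in G\) sending \(\Lambda(X_{\overline{g}})_t\) onto \(\Lambda(X_{\overline{g}})_{hth^{-1}}\). Since the action of \(G\) on \(\overline{g}\) by conjugation is transitive with stabilizer \(\mathcal{C}_g\) at \(g\), the standard induction argument supplies a \(KG\)-isomorphism
\[
    \Lambda(X_{\overline{g}}) \cong KG \otimes_{K\mathcal{C}_g} \Lambda(X_{\overline{g}})_g,
\]
and Shapiro's lemma then yields the first claim.

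For the second isomorphism, applying \cite[Theorem 3.24]{PhDThesis_Jerez_2024} to the subgroup \(\mathcal{C}_g\) reduces the problem to exhibiting a \(K\mathcal{C}_g\)-linear isomorphism \(\Lambda_{\mathcal{C}_g}(X_g) \cong \Lambda(X_{\overline{g}})_g\). The grading hypothesis ensures that \(X_g\) is stable under the restricted \(K_{\mathrm{par}}\mathcal{C}_g\)-action, so Proposition~\ref{p: Psi is injective} (with \(N = \mathcal{C}_g\) and \(V = X_g\)) supplies an injective \(K\mathcal{C}_g\)-linear map \(\Psi: \Lambda_{\mathcal{C}_g}(X_g) \hookrightarrow \Lambda(X_{\overline{g}})\), whose image lies in \(\Lambda(X_{\overline{g}})_g\) by Proposition~\ref{p: G graded decomposition of Lambda X}. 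To establish surjectivity onto \(\Lambda(X_{\overline{g}})_g\), I would pick a generator \(\lfloor h, x_t \rfloor\) with \(x_t \in X_t\) and \(hth^{-1} = g\), and use the extra hypothesis to write \(x_t = \sum_i [u_i] y_i\) with \(y_i \in X_g\) and \(u_i g u_i^{-1} = t\). The defining relation of \(\Lambda\) then gives
\[
    \lfloor h, x_t \rfloor = \sum_i \lfloor h u_i, e_{u_i^{-1}} y_i \rfloor,
\]
and a direct check shows \(hu_i \in \mathcal{C}_g\) and \(e_{u_i^{-1}} y_i \in X_g\), so the right-hand side lies in the image of \(\Psi\).

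The main obstacle is the identification \(\Lambda(X_{\overline{g}}) \cong KG \otimes_{K\mathcal{C}_g} \Lambda(X_{\overline{g}})_g\): one must verify carefully that the \(KG\)-action permutes the homogeneous pieces freely over the coset space \(G/\mathcal{C}_g\), so that the classical induction isomorphism applies. Once this is in place, Shapiro's lemma handles the homology transfer, and the surjectivity of \(\Psi\) in the second part is essentially bookkeeping with the defining relation \(\lfloor h, [u] y \rfloor = \lfloor hu, e_{u^{-1}} y \rfloor\) of the globalization.
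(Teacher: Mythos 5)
Your proposal is correct and follows essentially the same route as the paper: the first isomorphism via the conjugation grading on \(\Lambda(X_{\overline{g}})\), the induced-module identification \(\Lambda(X_{\overline{g}})\cong KG\otimes_{K\mathcal{C}_g}\Lambda(X_{\overline{g}})_g\), and Shapiro's lemma; the second via Proposition~\ref{p: Psi is injective} plus the same surjectivity computation \(\lfloor h, [u_i]y_i\rfloor = \lfloor hu_i, e_{u_i^{-1}}y_i\rfloor\) with \(hu_i\in\mathcal{C}_g\). The checks you flag as remaining (transitivity of the conjugation action with stabilizer \(\mathcal{C}_g\), and \(e_{u_i^{-1}}y_i\in X_g\)) are exactly the routine verifications the paper leaves implicit.
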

\begin{proof}
    By Proposition~\ref{p: G graded decomposition of Lambda X} and Remark~\ref{r: globalization grading preserve direct sums} we obtain a \(G\)-grading
    \[
        \Lambda_G(X_{\overline{g}}) = \bigoplus_{h \in \overline{g}} \Lambda_G(X_{\overline{g}})_h,
    \]
    such that \(s \cdot \Lambda_G(X_{\overline{g}})_h = \Lambda_G(X_{\overline{g}})_{shs^{-1}}\) for all \(s \in G\) and \(h \in \overline{g}\).
    Therefore \(\Lambda_G(X_{\overline{g}})~\cong~KG~\otimes_{K \mathcal{C}_g}~\Lambda_G(X_{\overline{g}})_g\) as left \(K_\mathrm{par}G\)-modules.
    Hence, by Shapiro's lemma obtain 
    \vspace*{-6pt}
    \[
         H_\bullet(\mathcal{C}_g, \Lambda_G(X_{\overline{g}})_g)
         \cong H_{\bullet}(G, \Lambda_G(X_{\overline{g}})) 
         \overset{(\flat)}{\cong} H^\mathrm{par}_\bullet(G, X_{\overline{g}}),
    \]
    where \((\flat)\) follows from \cite[Theorem 3.24]{PhDThesis_Jerez_2024},
    which proves the first part of the proposition.

    Now we prove the second part.
    Clearly, $X_g$ is a module over $K_\mathrm{par}\mathcal{C}_g$.
    Moreover, by Proposition~\ref{p: Psi is injective},
    there exists an injective map of $\mathcal{C}_g$-modules
    \(\Psi_{g}: \Lambda_{\mathcal{C}_g}(X_g) \to \Lambda_G(X_{\overline{g}})_g \subseteq \Lambda_{G}(X_{\overline{g}})\)
    given by \(\Psi_{g}(\lfloor c, x_g \rfloor_{\mathcal{C}_g}) = \lfloor c, x_g \rfloor_G\).
    We will prove that $\Psi_{g}$ is surjective, and hence an isomorphism.
    Recall that by definition \(\Lambda_G(X_{\overline{g}})_{g}\) is generated by the set \(\{ \lfloor s, x_{t} \rfloor_{G} : sts^{-1}  = g \}\).
    Let \(h,t \in G\) such that \(hth^{-1} = g\) and \(x_t \in X_t\).
    By hypothesis, there exists families \(\{ s_i \} \subseteq G\) and \(\{ y_{g}^{i} \} \subseteq X_g\) such that \(x_t = \sum_{i} [s_i] y_g^{i}\) and \(s_{i}gs_{i}^{-1} = t\) for each \(i\).
    Notice that \(c_{i}:=hs_{i} \in \mathcal{C}_{g}\).
    Then
    \[
       \lfloor h, x_t \rfloor_G
       =
       \sum_i \lfloor h, [h^{-1}c_i] y_g^{i} \rfloor_G
       =
       \sum_i \lfloor c_i, e_{c_i^{-1}h} y_g^{i} \rfloor_G
       \in \im \Psi_g.
    \]
    Thus, \(\Psi_{g}\) is surjective, and therefore an isomorphism.
    Finally, by \cite[Theorem 3.24]{PhDThesis_Jerez_2024} we obtain
    \[
        H_{\bullet}(\mathcal{C}_{g}, \Lambda_G(X_{\overline{g}})_g)
        \overset{\Psi_g}{\cong}
        H_{\bullet}(\mathcal{C}_{g}, \Lambda_{\mathcal{C}_g}(X_g))
        \cong
        H_{\bullet}^\mathrm{par}(\mathcal{C}_g, X_g),
    \]
    which completes the proof.
\end{proof}

Combining Proposition~\ref{p: char of conjugated graded modules}
with Theorem~\ref{t: spectral sequence split}, we obtain the main theorem of this work.
\begin{theorem}\label{t: main}
    Let \(S\) be an unital epsilon-strongly \(G\)-graded algebra, set \(A := S_1\), and let \(M\) be a \(G\)-graded \(S\)-bimodule.
    Then
    \[
        H_{\bullet}(S,M) \cong \bigoplus_{\overline{g} \in \operatorname{Conj}(G)} H^{\overline{g}}_{\bullet}(S,M),
    \]
    where \(H^{\overline{g}}_{\bullet}(S,M)\) is the abutment of a homology spectral sequence \(_{\overline{g}}E\) such that
    \[
        _{\overline{g}}E_{p,q}^2 = H^\mathrm{par}_p(G, H_q(A, M_{\overline{g}})) \Rightarrow  H_{p+q}^{\overline{g}}(S,M).
    \]
    For each \(\overline{g} \in \operatorname{Conj}(G)\) the \(KG\)-module \(\Lambda(H_{q}(A,M_{\overline{g}}))\) admits a natural \(G\)-grading such that
    \[
        H^\mathrm{par}_p(G, H_q(A, M_{\overline{g}})) \cong H_{p}(\mathcal{C}_g, \Lambda(H_q(A, M_{\overline{g}}))_g).
    \]
    Finally, if in addition 
    \[
        H_{q}(A, M_t) = \sum_{\substack{h\in G \\ hgh^{-1}=t}} [h] \cdot H_{q}(A, M_g), \text{ for all } t \in \overline{g},
    \]
    then
    \[
        H^\mathrm{par}_p(G, H_q(A, M_{\overline{g}})) \cong H^\mathrm{par}_p(\mathcal{C}_g, H_q(A, M_g)), \text{ for all } p \geq 0.
    \]
\end{theorem}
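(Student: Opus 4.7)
The first half of the theorem (the conjugacy-class decomposition together with the existence of the spectral sequences with the stated $E^{2}$-page) is precisely the content of Theorem~\ref{t: spectral sequence split}, so the new work lies in analyzing the $E^{2}$-page via Proposition~\ref{p: char of conjugated graded modules}. The plan is to show that for every $\overline{g}\in\operatorname{Conj}(G)$ and every $q\geq 0$, the $K_{\mathrm{par}}G$-module $X:=H_{q}(A,M_{\overline{g}})$ admits a $G$-grading satisfying $[h]\cdot X_{t}\subseteq X_{hth^{-1}}$; both remaining statements of the theorem will then follow immediately from Proposition~\ref{p: char of conjugated graded modules}, whose two parts correspond exactly to the two isomorphisms to be proved (the extra hypothesis of the theorem is literally the hypothesis of its second part).

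First I would record the $G$-grading on $X$. Since Hochschild homology is additive in the bimodule argument, the decomposition $M_{\overline{g}}=\bigoplus_{h\in\overline{g}}M_{h}$ in $\lmod{A^{e}}$ gives $X=\bigoplus_{h\in\overline{g}}H_{q}(A,M_{h})$, so setting $X_{h}:=H_{q}(A,M_{h})$ for $h\in\overline{g}$ and $X_{t}:=0$ otherwise yields a canonical $G$-grading supported on $\overline{g}$. The main step is then to verify the shifting rule $[h]\cdot X_{t}\subseteq X_{hth^{-1}}$ at the level of derived functors. For this I would invoke Lemma~\ref{l: EPgrad} to pick a resolution $P_{\bullet}\to M_{\overline{g}}$ that is simultaneously projective in $\lmod{S^{e}}$ and in $\lmod{S^{e}}_{\mathrm{gr}}$. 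By Proposition~\ref{p: F HA isomorphism}(iii) the complex $A\otimes_{A^{e}}P_{\bullet}$ computes $H_{\bullet}(A,M_{\overline{g}})$, and each of its terms inherits both the $G$-grading from $P_{\bullet}$ and the partial $G$-representation of~\eqref{eq: partial representation on H0}. The explicit formula $\pi_{h}(1\otimes_{A^{e}}p_{t})=\sum_{i}1\otimes_{A^{e}}R_{h}^{i}\,p_{t}\,L_{h^{-1}}^{i}$, together with the facts that $R_{h}^{i}\in S_{h}$, $L_{h^{-1}}^{i}\in S_{h^{-1}}$ and that $P_{\bullet}$ is $G$-graded as an $S$-bimodule, shows that $\pi_{h}$ sends the degree-$t$ part into the degree-$hth^{-1}$ part at the chain level. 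Because the differentials are simultaneously $K_{\mathrm{par}}G$-linear and grading-preserving, this compatibility descends to the homology module $X$.

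Once this is in place, applying Proposition~\ref{p: char of conjugated graded modules} to $X$ with its $G$-grading produces the natural $G$-grading on $\Lambda(H_{q}(A,M_{\overline{g}}))$ and the isomorphism $H^{\mathrm{par}}_{p}(G,H_{q}(A,M_{\overline{g}}))\cong H_{p}(\mathcal{C}_{g},\Lambda(H_{q}(A,M_{\overline{g}}))_{g})$; the final refinement follows from the second part of the same proposition, whose hypothesis is precisely the one imposed in the statement. I expect the main obstacle to be a bookkeeping point: verifying that the derived $K_{\mathrm{par}}G$-action on each $H_{q}(A,M_{\overline{g}})$ genuinely agrees, as a graded partial representation, with the one obtained from the graded resolution, so that the chain-level degree-shifting calculation remains valid after passing to homology and is independent of the choice of resolution. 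Everything else is assembly of results already established.
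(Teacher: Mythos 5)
Your proposal is correct and follows essentially the same route as the paper, whose proof is simply ``apply Proposition~\ref{p: char of conjugated graded modules} to \(X=H_q(A,M_{\overline g})\) and substitute into Theorem~\ref{t: spectral sequence split}.'' Your additional chain-level verification that \(H_q(A,M_{\overline g})\) carries a \(G\)-grading with \([h]\cdot X_t\subseteq X_{hth^{-1}}\) (via a resolution as in Lemma~\ref{l: EPgrad} and the formula \eqref{eq: partial representation on H0}) is a point the paper leaves implicit, and it is carried out correctly.
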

\begin{proof}
    Apply Proposition~\ref{p: char of conjugated graded modules} for \(X= H_{q}(A, M_{\overline{g}})\), and make the respective substitutions in Theorem~\ref{t: spectral sequence split}.
\end{proof}

\bibliographystyle{abbrv}
\bibliography{bibliography/azu.bib}

\begin{thebibliography}{10}

\bibitem{Article_Alvares-Alves-Redondo_2017_COPSP}
E.~R. Alvares, M.~M. Alves, and M.~J. Redondo.
\newblock Cohomology of partial smash products.
\newblock {\em J. Algebra}, 482:204--223, 2017.

\bibitem{Article_Alves-Dokuchaev-Kochloukova_2020_HACVTPGA}
M.~M. Alves, M.~Dokuchaev, and D.~H. Kochloukova.
\newblock Homology and cohomology via the partial group algebra.
\newblock {\em Q. J. Math.}, page haae017, 05 2020.

\bibitem{Book_De-Ingraham_2006_SAOCR}
F.~De~Meyer and E.~Ingraham.
\newblock {\em Separable Algebras over Commutative Rings}.
\newblock Lecture Notes in Mathematics. {Springer Berlin Heidelberg}, 2006.

\bibitem{Article_Dokuchaev-Jerez_2023_OPSP}
M.~Dokuchaev and E.~Jerez.
\newblock {(Co)homology} of partial smash products.
\newblock {\em J. Algebra}, 652:113--157, 2024.

\bibitem{Article_Dokuchaev-Jerez_2023_TTPGAAOPCP}
M.~Dokuchaev and E.~Jerez.
\newblock The twisted partial group algebra and (co)homology of partial crossed products.
\newblock {\em Bull. Braz. Math. Soc., New Series}, 55:33, 2024.

\bibitem{Book_Exel_2017_PDSFBAA}
R.~Exel.
\newblock {\em Partial Dynamical Systems, Fell Bundles and Applications}.
\newblock {American Mathematical Society}, 2017.

\bibitem{PhDThesis_Jerez_2024}
E.~Jerez.
\newblock {\em Partial actions and homology}.
\newblock Phd thesis, University of São Paulo, São Paulo, Brazil, Aug. 2024.

\bibitem{Article_Lorenz_1992_OHGA}
M.~Lorenz.
\newblock On the homology of graded algebras.
\newblock {\em Commun. Algebra}, 20(2):489--507, 1992.

\bibitem{Article_Lundstrom-Oinert-Orozco-Pinedo_2025}
P.~Lundström, J.~Öinert, L.~Orozco, and H.~Pinedo.
\newblock Very good gradings on matrix rings are epsilon-strong.
\newblock {\em Linear and Multilinear Algebra}, 73(1):40--48, 2025.

\bibitem{Article_Lannstrom_2020_TGSACR}
D.~Lännström.
\newblock The graded structure of algebraic cuntz-pimsner rings.
\newblock {\em J. Pure Appl. Algebra}, 224(9):106369, 2020.

\bibitem{Article_Nystedt-Oinert-Pinedo_2018_EGRSAS}
P.~Nystedt, J.~{\"O}inert, and H.~Pinedo.
\newblock Epsilon-strongly graded rings, separability and semisimplicity.
\newblock {\em J. Algebra}, 514:1--24, 2018.

\bibitem{Article_Nystedt-Oinert_2020_GGOLPA}
P.~Nystedt and J.~Öinert.
\newblock Group gradations on {Leavitt} path algebras.
\newblock {\em J. Algebra Its Appl.}, 19(9):2050165, 2020.

\bibitem{Book_Rotman_2008_AITHA}
J.~Rotman.
\newblock {\em An Introduction to Homological Algebra}.
\newblock Universitext. {Springer New York}, 2008.

\bibitem{Book_Weibel_1994_AITHA}
C.~A. Weibel.
\newblock {\em An Introduction to Homological Algebra}.
\newblock Cambridge Studies in Advanced Mathematics. {Cambridge University Press}, {Cambridge}, 1994.

\end{thebibliography}

\end{document}